\numberwithin{equation}{section}
\newcommand\qedsymbol{\hbox{$\Box$}}
\newcommand\qed{\relax\ifmmode\Box\else
  {\unskip\nobreak\hfil\penalty50\hskip1em\null\nobreak\hfil\qedsymbol
  \parfillskip=\z@\finalhyphendemerits=0\endgraf}\fi}
\newcommand\subqedsymbol{\hbox{$\triangledown$}}
\newcommand\subqed{\relax\ifmmode\triangledown\else
  {\unskip\nobreak\hfil\penalty50\hskip1em\null\nobreak\hfil\subqedsymbol
  \parfillskip=\z@\finalhyphendemerits=0\endgraf}\fi}
\newenvironment{proof}[1][{}]{\par\noindent Proof{#1}. }{\qed}
\newcommand{\hotimes}{{\,\hat{\otimes}\,}}
\newcommand{\Cbu}{C^{\bullet}}
\renewcommand{\deg}{{\mathrm{deg}}}
\newcommand{\Hom}{\mathrm{Hom}}
\newcommand{\tw}{\mathrm{tw}}
\newcommand{\TL}{\mathsf{TL}}
\newcommand{\mMC}{\mathbf{MC}}
\newcommand{\Hoch}{{\mathrm{Hoch}}}
\newcommand{\Sh}{{\mathrm{Sh}}}
\newcommand{\Shift}{{\mathrm{Shift}}}
\newcommand{\Der}{{\mathrm {D e r}}}
\newcommand{\bul}{{\bullet}}
\newcommand{\al}{{\alpha}}
\newcommand{\la}{{\lambda}}
\newcommand{\mm}{{\mathfrak{m}}}
\newcommand{\mG}{{\mathfrak{G}}}
\newcommand{\bs}{{\mathbf{s}}}
\newcommand{\bsi}{{\mathbf{s}^{-1}\,}}
\newcommand{\bsis}{{\mathbf{s}^{-2}\,}}
\newcommand{\cF}{\mathcal{F}}
\newcommand{\cG}{\mathcal{G}}
\newcommand{\cL}{\mathcal{L}}
\newcommand{\cO}{\mathcal{O}}
\newcommand{\cU}{\mathcal{U}}
\newcommand{\bbk}{{\Bbbk}}
\newcommand{\bbC}{{\mathbb C}}
\newcommand{\bbR}{{\mathbb R}}
\newcommand{\bbZ}{{\mathbb Z}}
\newcommand{\te}{\theta}
\newcommand{\de}{{\delta}}
\newcommand{\D}{{\Delta}}
\newcommand{\om}{{\omega}}
\newcommand{\Om}{{\Omega}}
\newcommand{\si}{{\sigma}}
\newcommand{\ve}{{\varepsilon}}
\newcommand{\ka}{{\kappa}}
\newcommand{\Te}{{\Theta}}
\newcommand{\sm}{\mathsf{m}}
\newcommand{\sJ}{\mathsf{J}}
\newcommand{\sPD}{\mathsf{PD}}
\newcommand{\sPV}{\mathsf{PV}}
\newcommand{\Omb}{{\Omega^{\bullet}}}
\newcommand{\wave}{\,\tilde{~}\,}
\newcommand{\ti}[1]{{\tilde{#1}}}
\newcommand{\und}[1]{{\underline{#1}}}
\newcommand{\del}{\partial}
\newcommand{\MC}{\mathrm{MC}}
\newtheorem{thm}{Theorem}[section]
\newtheorem{cor}[thm]{Corollary}
\newtheorem{conj}[thm]{Conjecture}
\newtheorem{prop}[thm]{Proposition}
\newtheorem{claim}[thm]{Claim}
\newtheorem{cond}[thm]{Condition}
\newtheorem{remark}[thm]{Remark}
\title{Towards deformation quantization over a $\bbZ$-graded base}
\author{E. Altinay-Ozaslan and V. Dolgushev}
\date{}
\begin{document}

\maketitle

\begin{flushright}
{\small\it To Murray Gerstenhaber, Jim Stasheff and Dennis Sullivan on the occasion of their jubilees}
\end{flushright}

~\\

\begin{abstract} 
The goal of this note is to describe a class of formal deformations of a symplectic 
manifold $M$ in the case when the base ring of the deformation problem 
involves parameters of non-positive degrees. The interesting feature of 
such deformations is that these are deformations ``in $A_{\infty}$-direction'' 
and, in general,  their description involves all cohomology classes of $M$ 
of degrees $\ge 2$.  
\end{abstract}

~\\
{\it Keywords:} Deformation quantization, formality morphisms\\
{\it AMS subject classification:} 53D55, 18G55.

\section{Introduction}

Let $M$ be a real manifold $M$, $\cO(M)$ be the algebra of smooth complex-valued functions on $M$, 
and $\ve, \ve_1, \dots, \ve_g$ be formal variables of degrees 
\begin{equation}
\label{degrees}
\deg(\ve)=0\,, \quad \deg(\ve_1) = d_1, \quad  \deg(\ve_2) = d_2,  ~\dots, ~  \deg(\ve_g) = d_g\,,
\end{equation}
where $d_1, d_2, \dots, d_g$ are non-positive integers. 

In this paper, we investigate the problem of deformation quantization \cite{BFFLS}, \cite{BFFLS1}, 
\cite{Berezin}, \cite{BCG}, \cite{Deligne}, \cite{Fedosov}, \cite{K} of $M$ in the setting when we have several formal deformation 
parameters $\ve, \ve_1, \dots, \ve_g$ and some of the (non-positive) integers $d_1, d_2, \dots, d_g$ are 
actually non-zero. 

A formal deformation of $\cO(M)$, in this setting, is a (non-curved) 
$\bbC[[\ve, \ve_1, \dots, \ve_g]]$-linear $A_{\infty}$-structure on
$\cO(M)[[\ve, \ve_1, \dots, \ve_g]]$ with the multiplications $\{\sm_n\}_{n \ge 2}$ of the form
\begin{equation}
\label{mult-n}
\sm_n (a_1, \dots, a_n)  =
\begin{cases}
\displaystyle  a_1 a_2   + \sum_{k_0 d_0 + \dots + k_g d_g = 0 } 
\ve^{k_0} \ve_1^{k_1} \dots \ve_g^{k_g}
\mu_{k_0, k_1, \dots, k_g}  (a_1, a_2) \qquad {\rm if} ~~ n = 2, \\[0.8cm]
\displaystyle  \sum_{k_0 d_0 + \dots + k_g d_g = 2-n} 
\ve^{k_0} \ve_1^{k_1} \dots \ve_g^{k_g}
\mu_{k_0, k_1, \dots, k_g}  (a_1, \dots, a_n)  \qquad {\rm if} ~~ n > 2,
\end{cases} 
\end{equation}
where each $\mu_{k_0, k_1, \dots, k_g}$ is a polydifferential operator on $M$ (with complex coefficients)
acting on $2 -k_0 d_0 - \dots - k_g d_g $ arguments. Moreover,   $\mu_{k_0, k_1, \dots, k_g} \equiv 0$ if 
at least one $k_i < 0$ or $k_0 + k_1 + \dots + k_g  = 0$.

Such $A_{\infty}$-structures are in bijection with Maurer-Cartan (MC) elements of the dg Lie algebra 
\begin{equation}
\label{cL-intro}
(\ve, \ve_1, \dots, \ve_g)\, \sPD^{\bul}(M)[[\ve, \ve_1, \dots, \ve_g]],
\end{equation}
where $\sPD^{\bul}(M)$ denotes the algebra of polydifferential operators\footnote{See Section \ref{sec:sPD-sPV}.} on $M$. 

Let us denote by $\mG$ the group which is obtained by exponentiating the Lie algebra of degree zero elements of 
\eqref{cL-intro} and recall \cite{BDW}, \cite{Ezra-Deligne} that this group acts in a natural way on the set of MC elements of 
\eqref{cL-intro}. 

By analogy with $1$-parameter (``ungraded'') formal deformations, we declare that two such formal deformations are 
equivalent if the corresponding MC elements of \eqref{cL-intro} belong to the same orbit of the action of $\mG$. 

Let us observe that, for every MC element $\mu$ of \eqref{cL-intro}, the coset of $\mu$ in 
\begin{equation}
\label{quotient-intro}
(\ve, \ve_1, \dots, \ve_g)\, \sPD^{\bul}(M)[[\ve, \ve_1, \dots, \ve_g]] ~\big/~ (\ve, \ve_1, \dots, \ve_g)^2\, \sPD^{\bul}(M)[[\ve, \ve_1, \dots, \ve_g]]
\end{equation}
is closed with respect to the Hochschild differential $\del^{\Hoch}$. Moreover, if two MC elements $\mu_1$ and $\mu_2$ lie on 
the same orbit of $\mG$, then the corresponding cosets in \eqref{quotient-intro} are $\del^{\Hoch}$-cohomologous.  
By analogy with ``ungraded'' formal deformations, we call the $\del^{\Hoch}$-cohomology class of the coset of $\mu$ in
\eqref{quotient-intro}  the Kodaira-Spencer class of $\mu$. 

Let us recall that the cohomology space of $\sPD^{\bul}(M)$ (with respect to $\del^{\Hoch}$) is isomorphic to the space 
$\sPV^{\bul}(M)$ of polyvector fields
on $M$. So the Kodaira-Spencer class of every MC element of \eqref{cL-intro} can be identified 
with a degree $1$ vector in the graded space
$$
\ve \sPV^{\bul}(M) \oplus \ve_1 \sPV^{\bul}(M) \oplus \dots \oplus  \ve_g \sPV^{\bul}(M).
$$ 

Let us now assume that $M$ has a symplectic structure\footnote{In particular, it means that the dimension of $M$ is even.} 
$\om$ and denote by $\al \in  \sPV^{1}(M)$ the Poisson structure corresponding to $\om$.

In this paper, we consider formal deformations \eqref{mult-n} of $\cO(M)$ which satisfy these two conditions:  

\begin{enumerate}

\item the Kodaira-Spencer class of this deformation is $\ve\, \al$ and

\item 
$$
\sm_n  \big|_{\ve = 0} ~ = ~
\begin{cases}
\displaystyle  a_1 a_2   \qquad {\rm if} ~~ n = 2, \\[0.3cm]
 0 \qquad {\rm if} ~~ n > 2.
\end{cases} 
$$

\end{enumerate}

We denote by $\TL$ the set of equivalence classes of formal deformations \eqref{mult-n} satisfying the above 
conditions and call $\TL$ the {\it topological locus} of the triple $(M, \om, \{\ve, \ve_1, \dots, \ve_g \})$. 
Using Kontsevich's formality \cite{K} and a construction inspired by paper \cite{ShT} 
due to Sharygin and Talalaev, we give a description of the topological locus $\TL$ in terms of 
the singular cohomology of $M$. More precisely,  
\begin{thm}
\label{thm:intro}
For every symplectic manifold $(M,\om)$, the equivalence classes of formal deformations 
\eqref{mult-n} of $\cO(M)$ satisfying the above conditions are 
in bijections with degree $2$ vectors of the graded vector space
$$
\bigoplus_{q \geq 0}  \, (\ve, \ve_1, \dots, \ve_g) \, H^q(M, \bbC)[[\ve, \ve_1, \dots, \ve_g]],
$$
where $H^{\bul}(M, \bbC)$ is the singular cohomology of $M$ with coefficients in $\bbC$ and 
every vector of $H^{q}(M, \bbC)$ carries degree $q$. 
\end{thm}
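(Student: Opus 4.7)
The plan is to transfer the problem to polyvector fields via Kontsevich's formality theorem, and then classify the resulting Maurer--Cartan elements by characteristic classes in de~Rham cohomology using the symplectic structure. First, apply Kontsevich's $L_\infty$-quasi-isomorphism $\mathcal{K} \colon \sPV^\bul(M) \to \sPD^\bul(M)$: it induces a bijection between gauge-equivalence classes of MC elements of $\mm\,\sPD^\bul(M)[[\ve, \ve_1, \dots, \ve_g]]$ and of $\mm\,\sPV^\bul(M)[[\ve, \ve_1, \dots, \ve_g]]$ equipped with the Schouten--Nijenhuis bracket, where $\mm := (\ve, \ve_1, \dots, \ve_g)$. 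Because the linear Taylor component of $\mathcal{K}$ is the HKR map, condition~(1) translates on the polyvector side to $\pi \equiv \ve\,\al \bmod \mm^2$. Condition~(2) says $\mu|_{\ve=0} = 0$, and applying the same bijection over the subring $\bbC[[\ve_1, \dots, \ve_g]]$ shows that $\pi|_{\ve = 0}$ is gauge-trivial; a standard lifting argument then yields a representative of the form $\pi = \ve\,\rho$ with $\rho \in \sPV^\bul(M)[[\ve, \ve_1, \dots, \ve_g]]$ of degree $1$ satisfying $\rho \equiv \al \bmod \mm$.

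Writing $\rho = \al + \rho'$ with $\rho' \in \mm\,\sPV^\bul(M)[[\ve, \ve_1, \dots, \ve_g]]$, the equation $[\rho,\rho]=0$ becomes $d_\al \rho' + \tfrac{1}{2}[\rho', \rho'] = 0$ with $d_\al := [\al, -]$; this is the MC equation for $\rho'$ in the dg Lie algebra $\mg := (\sPV^\bul(M), d_\al, [-,-])$, and the residual gauge action on $\rho'$ coincides with the standard gauge action of this shifted dg Lie algebra. Since $\om$ is symplectic, contraction with $\om$ yields an isomorphism of cochain complexes $\Psi \colon (\sPV^\bul(M), d_\al) \xto{\sim} (\Omb(M), d_{\mathrm{dR}})$ that shifts degree by $+1$, giving $H^\bul(\mg) \cong H^{\bul+1}(M,\bbC)$.

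The main step, inspired by \cite{ShT}, is to construct the characteristic class: given an MC element $\rho'$, one corrects $\Psi(\rho')$ inductively in $\mm$-degree to produce a $d_{\mathrm{dR}}$-closed degree $2$ element of $\mm\,\Omb(M)[[\ve, \ve_1, \dots, \ve_g]]$ whose cohomology class depends only on the gauge class of $\rho'$. Conversely, any degree $2$ element of $\mm\,H^\bul(M,\bbC)[[\ve, \ve_1, \dots, \ve_g]]$ lifts to some $\rho'$ by recursively solving the MC equation. The crucial input in both directions is that the Schouten bracket is transferred by $\Psi$ to a bracket on $\Omb(M)$ induced by the BV operator $\Delta := [d_{\mathrm{dR}}, \iota_\al]$; consequently the induced bracket on $H^\bul(M,\bbC)$ vanishes, since for closed forms $\beta, \gamma$ the bracket $\{\beta, \gamma\}$ is automatically $d_{\mathrm{dR}}$-exact. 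This kills all obstructions at every order in $\mm$. The main obstacle will be the explicit construction of these correction and lifting procedures together with the verification of their gauge invariance; this is where the Sharygin--Talalaev-type machinery does the heavy lifting. Finally, the degree count --- $\rho'$ has total degree $1$ in $\mm\,\sPV^\bul(M)[[\ve, \ve_1, \dots, \ve_g]]$, and $\Psi$ shifts degree by $+1$ --- matches the required degree $2$ in the statement.
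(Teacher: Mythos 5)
Your overall route coincides with the paper's for its first two stages: Kontsevich's formality quasi-isomorphism, twisted by $\ve\al$, reduces the problem to MC elements of $\big(\ve\,\mm\,\sPV^{\bul}(M)[[\ve,\ve_1,\dots,\ve_g]],\,[\ve\al,-]_S,\,[-,-]_S\big)$ (this is Propositions \ref{prop:MC-in-quest} and \ref{prop:to-PV}; note that the transfer of $\pi_0$ along the twisted $L_\infty$-morphism is not automatic but rests on the filtered Goldman--Millson theorem of \cite{GM} together with the filtration compatibility of Claim \ref{cl:GM-OK}, which you invoke only implicitly), and contraction with $\om$ then transports everything to exterior forms with differential $-d$, as in the paper's $\sJ_{\om}$. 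The genuine gap is in your final step. You claim that because the transported bracket is controlled by the BV-type operator $[d,\iota_\al]$ and hence vanishes on de Rham cohomology, ``all obstructions at every order in $\mm$'' are killed and the bijection with degree-$2$ classes in $\mm\,H^{\bul}(M,\bbC)[[\ve,\ve_1,\dots,\ve_g]]$ follows. Vanishing of the binary bracket on cohomology only disposes of the first obstruction: the obstruction cocycle at order $k+1$ has the form $\sum_{i+j=k+1}[\rho'_i,\rho'_j]$ in which the individual correction terms $\rho'_i$ are \emph{not} closed, so the exactness of $\{\beta,\gamma\}$ for closed $\beta,\gamma$ does not apply to it; in general a dg Lie algebra whose bracket vanishes on cohomology can still carry nontrivial Massey-type obstructions and a nonabelian MC moduli. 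Moreover, even granting unobstructedness of the lifting, you still owe the injectivity of the characteristic-class map (that two MC elements with equal class are gauge equivalent), which you explicitly defer to unspecified ``Sharygin--Talalaev-type machinery.''

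What the paper actually proves is the stronger statement that closes exactly this gap: there is an explicit degree-zero coderivation $\Pi$ of $\und{S}(\bsi\ti{\cL}_{\Om})$ (formula \eqref{Pi}) satisfying $\Pi\circ Q_{-d}-Q_{-d}\circ\Pi=Q_{[~,~]_{\om}}$ and $\Pi\circ Q_{[~,~]_{\om}}=Q_{[~,~]_{\om}}\circ\Pi$ (Proposition \ref{prop:Pi}; the second identity is precisely the Jacobi identity for $\al$), so that $\exp(\Pi)$ is a \emph{strictly invertible} $L_\infty$-isomorphism from the abelian $(\ti{\cL}_{\Om},-d,0)$ to $(\ti{\cL}_{\Om},-d,[~,~]_{\om})$. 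This single coherence statement replaces your entire inductive correction/lifting scheme and yields both surjectivity and injectivity on $\pi_0$ of the Deligne groupoids at once (Claim \ref{cl:Te-Pi}). To salvage your obstruction-theoretic version you would have to establish something equivalent to the existence of $\Pi$, i.e., that the bracket is trivialized coherently to all orders, not merely on cohomology.
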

\begin{remark}
\label{rem:BCG}
In the ``ungraded'' case (i.e. $g = 0$), this result reproduces the classical theorem 
\cite{BCG}, \cite{Deligne} of Bertelson, Deligne, Cahen, and Gutt on the description of the equivalence 
classes of star products on a symplectic manifold. In this respect, Theorem \ref{thm:intro} 
may be viewed as a generalization of this classification theorem
to the case of a $\bbZ$-graded base.
\end{remark}
\begin{remark}
\label{rem:Z-graded}
Deformations over a $\bbZ$-graded (and even differential graded) base were considered in the literature. 
See, for example, paper \cite{BK} by Barannikov and Kontsevich or J. Lurie's ICM address \cite{LurieDAGX}
in which even more sophisticated examples of bases for deformation problems were considered.  
We should also mention that deformations over a differential graded base naturally show up 
in the construction of rational homotopy models for classifying spaces of fibrations. For more 
details, we refer the reader to paper \cite{Lazar} by Lazarev.  
\end{remark}

\paragraph*{Organization of the paper.} The remainder of the introduction is devoted to the notational 
conventions and preliminaries. In this part, we give a brief reminder of the Deligne-Getzler-Hinich(DGH) 
groupoid(s) and fix our conventions related to the sheaf of polydifferential operators 
and polyvector fields. 

Section \ref{sec:gen-story} starts with a brief reminder of $1$-parameter 
formal deformations of an associative algebra (over $\bbC$). Then we propose a natural generalization 
of this story to the case when the base ring of the deformation problem is the completion 
of the free graded\footnote{$\bbC[\ve, \ve_1, \dots, \ve_g]$ should not be confused with the polynomial 
algebra because $\ve_i \ve_j = (-1)^{d_i d_j} \ve_j \ve_i$.} 
commutative algebra $\bbC[\ve, \ve_1, \dots, \ve_g]$. Next, we consider the case when $A$ is the 
algebra of functions $\cO(M)$ on a smooth real manifold $M$. Finally, we assume that $M$ has 
a symplectic structure and formulate the main result of this paper (see Theorem \ref{thm:main}). 

The proof of Theorem \ref{thm:main} is given in Section \ref{sec:proof} and it is based on 
two auxiliary constructions which are presented in Sections \ref{sec:PV-PD} and \ref{sec:PV}, respectively. 
In Section \ref{sec:conj}, we propose two conjectures related to Theorem \ref{thm:main}. 
Finally, Appendix \ref{app:Pi} is devoted to the proof of a technical proposition.

~\\

\subsection{Notational conventions and preliminaries}
We assume that the ground field is the field of complex numbers $\bbC$ 
and set $\otimes : = \otimes_{\bbC}$, $\Hom : = \Hom_{\bbC}$. 
For a cochain complex $V$ we denote 
by $\bs V$ (resp. by $\bs^{-1} V$) the suspension (resp. the 
desuspension) of $V$\,. In other words, 
$$
\big(\bs V\big)^{\bul} = V^{\bul-1}\,,  \qquad
\big(\bs^{-1} V\big)^{\bul} = V^{\bul+1}\,. 
$$

The notation $S_{n}$ is reserved for the symmetric group 
on $n$ letters and  $\Sh_{p_1, \dots, p_k}$ denotes 
the subset of $(p_1, \dots, p_k)$-shuffles 
in $S_n$, i.e.  $\Sh_{p_1, \dots, p_k}$ consists of 
elements $\si \in S_n$, $n= p_1 +p_2 + \dots + p_k$ such that 
$$
\begin{array}{c}
\si(1) <  \si(2) < \dots < \si(p_1),  \\[0.3cm]
\si(p_1+1) <  \si(p_1+2) < \dots < \si(p_1+p_2), \\[0.3cm]
\dots   \\[0.3cm]
\si(n-p_k+1) <  \si(n-p_k+2) < \dots < \si(n)\,.
\end{array}
$$

For a groupoid $\cG$, $\pi_0(\cG)$ denotes the set of isomorphism classes of 
objects of $\cG$. For a graded vector space (or a cochain complex) $V$
the notation $S(V)$ (resp. $\und{S}(V)$) is reserved for the
underlying vector space of the
symmetric algebra (resp. the truncated symmetric algebra) of $V$: 
$$
S(V) = \bbC \oplus V \oplus S^2(V) \oplus S^3(V) \oplus \dots\,, 
$$ 
$$
\und{S}(V) =  V \oplus S^2(V) \oplus S^3(V) \oplus \dots\,,
$$
where 
$$
S^n(V) = \big( V^{\otimes\, n} \big)_{S_n}
$$
is the space of coinvariants with respect to the obvious action of $S_n$.

Recall that $\und{S}(V)$ is the vector space of the cofree cocommutative coalgebra 
(without counit) cogenerated by $V$. The comultiplication on $\und{S}(V)$ is given 
by the formula:
$$
\D(v_1, v_2, \dots, v_n) : = \sum_{p = 1}^{n-1} 
\sum_{\si \in \Sh_{p, n-p} } 
(-1)^{\ve(\si; v_1, \dots, v_n)} (v_{\si(1)}, \dots, v_{\si(p)}) \otimes 
(v_{\si(p+1)}, \dots, v_{\si(n)})\,,
$$
where $(-1)^{\ve(\si; v_1, \dots, v_n)}$ is the Koszul sign factor
\begin{equation}
\label{ve-si-vvv}
(-1)^{\ve(\si; v_1, \dots, v_n)} := \prod_{(i < j)} (-1)^{|v_i | |v_j|}
\end{equation}
and the product in \eqref{ve-si-vvv} is taken over all inversions $(i < j)$ of $\si \in S_n$.

For an associative algebra $A$, we denote by 
$\Cbu(A)$ the Hochschild cochain complex of $A$ with coefficients in $A$
and with the shifted grading: 
\begin{equation}
\label{Hoch-A}
\Cbu(A) = \bigoplus_{k \ge -1} C^k(A), \qquad 
C^k(A) : = \Hom(A^{\otimes \,(k+1)}, A).
\end{equation}
We denote by $[~,~]_G$ the Gerstenhaber bracket \cite{Ger} on $\Cbu(A)$: 
\begin{equation}
\label{Ger-brack}
[P_1, P_2]_{G} (a_0, \dots, a_{k_1+ k_2}) : = 
\end{equation}
$$
\sum_{i=0} (-1)^{i k_2} P_1 (a_0, \dots, a_{i-1}, P_2(a_i, \dots, a_{i+k_2}), a_{i+k_2+1}, \dots, a_{k_1+k_2})
- (-1)^{k_1 k_2} (1 \leftrightarrow 2),
$$
where $P_j  \in C^{k_j}(A)$.

It is convenient to think of the multiplication on $A$ as 
the Hochschild cochain $m_A \in C^1(A)$ and define the Hochschild 
differential $\del^{\Hoch}$ on $\Cbu(A)$ as
\begin{equation}
\label{del-Hoch}
\del^{\Hoch} : = [m_A , ~]_G \,. 
\end{equation}

We reserve the notation $HH^{\bul}(A)$ for the Hochschild cohomology  
of $A$ with coefficients in $A$, i.e. 
$$
HH^{\bul}(A) : = H^{\bul}\big(\Cbu(A) \big).   
$$
For example, if $A$ is the polynomial algebra $\bbC[x^1, \dots, x^m]$ is $m$ variables
then \cite[Section 3.2]{Loday}
\begin{equation}
\label{HH-poly-ls}
HH^{\bul}(A) \cong S_A\big( \bs \Der (A) \big),
\end{equation}
where $S_A(E)$ denotes the symmetric algebra of an $A$-module $E$ and 
$\Der(A)$ is the $A$-module of $\bbC$-linear derivations. 

If  $\ve, \ve_1, \dots, \ve_g$ are variables of degrees 
$$
\deg(\ve)=d_0\,, \quad \deg(\ve_1) = d_1, \quad  \deg(\ve_2) = d_2, ~ \dots,  ~~ \deg(\ve_g) = d_g\,,
$$
then the notation 
\begin{equation}
\label{C-veveve}
\bbC[\ve, \ve_1, \dots, \ve_g]
\end{equation}
is reserved for the free \und{graded} commutative algebra over $\bbC$
generated by $\ve, \ve_1, \dots, \ve_g$. Furthermore, 
$$
\bbC[[\ve, \ve_1, \dots, \ve_g]]
$$
denotes the completion of \eqref{C-veveve} with respect to the ideal 
$\mm = (\ve, \ve_1, \dots, \ve_g) \subset \bbC[\ve, \ve_1, \dots, \ve_g]$. 
For example, if $\ve_1$ carries an odd degree then $\ve_1^2 = 0$ in \eqref{C-veveve} 
and in its completion.  

For every dg Lie algebra $(L, \del, [~,~])$, the cofree cocommutative coalgebra 
\begin{equation}
\label{CE-L}
\und{S}(\bsi L)
\end{equation}
is equipped with a degree $1$ coderivation $Q$ which satisfies $Q^2 = 0$.
The composition\footnote{It is not hard to see that every coderivation $Q$ of a cofree cocommutative 
coalgebra \eqref{CE-L} is uniquely determined by the composition $p_{\bsi L} \circ Q : \und{S}(\bsi L) \to \bsi L$. } 
$p_{\bsi L} \circ Q$ of $Q$ with the canonical projection $p_{\bsi L}: \und{S}(\bsi L) \to \bsi L$
is expressed in terms of $\del$ and $[~,~]$ as follows: 
\begin{equation}
\label{CE-Q}
p_{\bsi L} \circ Q (\bsi v_1 \dots \bsi v_n) : = 
\begin{cases}
 \bsi (\del v_1) \qquad {\rm if} ~~ n =1, \\[0.18cm]
(-1)^{|v_1| - 1} \bsi [v_1, v_2] \qquad {\rm if} ~~ n=2, \\[0.18cm]
 0 \qquad {\rm if} ~~ n \ge 3.
\end{cases}
\end{equation}
The assignment $(L, \del, [~,~]) \mapsto \big( \und{S}(\bsi L), Q, \D \big)$ is often called 
the Chevalley-Eilenberg construction.  

Let us recall that an $L_{\infty}$-morphism $F$ from a dg Lie algebra 
$(L, \del, [~,~])$ to a dg Lie algebra  $(\ti{L}, \ti{\del}, [~,~]\wave)$ is a homomorphism 
of the corresponding dg cocommutative coalgebras 
\begin{equation}
\label{L-infty-def}
F : (\und{S}(\bsi L), Q) \to  (\und{S}(\bsi \ti{L}), \ti{Q}).
\end{equation}

It is not hard to see that every coalgebra homomorphism \eqref{L-infty-def} is 
uniquely determined by its composition 
$$
p_{\bsi \ti{L}} \circ F \, : \, \und{S}(\bsi L) ~ \to ~ \bsi \ti{L}
$$
with the canonical projection $p_{\bsi \ti{L}} : \und{S}(\bsi \ti{L}) \to  \bsi \ti{L}$. 
In this paper, we denote by $F_n$ the restriction of $p_{\bsi \ti{L}} \circ F$ onto $S^n (\bsi L)$: 
$$
F_n : = p_{\bsi \ti{L}} \circ F \big|_{ \, S^n (\bsi L) \, } : S^n (\bsi L) \to  \bsi \ti{L}
$$
and call $F_1$ the {\it linear term} of the $L_{\infty}$-morphism $F$.  Recall that, for every $L_{\infty}$-morphism $F$, 
its linear term $F_1$ is a chain map $(L, \del) \to (\ti{L}, \ti{\del})$.

Most of the differential graded (dg) Lie algebras $(L, \del, [~,~])$, we consider, are equipped with
a complete descending filtration 
\begin{equation}
\label{filtr-L}
\dots \supset \cF_0 L  \supset \cF_1 L \supset \cF_2 L \supset  \cF_3 L \supset \dots 
\qquad L \cong \lim_k L \big/ \cF_k L.
\end{equation}
Furthermore, we tacitly assume that our $L_{\infty}$-morphisms are compatible with the filtrations in the 
following sense 
\begin{equation}
\label{compat-w-filtr}
F_n  \big( \bsi \cF_{k_1} L \otimes \dots \otimes \bsi \cF_{k_n} L  \big)  ~\subset~  \bsi \cF_{k_1+ \dots + k_n} \ti{L}.
\end{equation}

For a smooth real manifold $M$, the notation $\cO_M$ (resp. $\cO(M)$) is reserved for the 
sheaf (resp. the algebra) of smooth complex valued functions on $M$. 
The symbols $x^1, x^2, \dots, x^m$ are often reserved for coordinates on an open subset $U \subset M$. 
$TM$ (resp. $T^*M$) denotes the tangent (resp. cotangent) bundle of $M$. Moreover, 
$\{ d x^1,  d x^2, \dots, dx^m \}$ and $\{ \te_1,  \te_2, \dots, \te_m \}$ will be the standard local 
frames\footnote{In other words, $\te_i : = \del/\del x^i\,.$} 
for $T^* M \big|_{U}$ and $TM \big|_U$ corresponding to coordinates $x^1, x^2, \dots, x^m$, respectively. 
In particular, the graded commutative algebra $\Omb(U)$ (resp. $\wedge^{\bul} TM (U)$) of exterior forms (resp. polyvector fields)
on $U$ will be tacitly identified with\footnote{Recall that, since the symbols $d x^i$ and $\te_i$ carry degree $1$, we have 
$dx^i dx^j = - dx^j dx^i$ and $\te_i \te_j = - \te_j \te_i$.} 
$\cO_M(U)[d x^1, d x^2, \dots, d x^m]$ (resp. $\cO_M(U)[\te_1, \te_2, \dots, \te_m]$). 
Occasionally, we will use the (left) ``partial derivative'' 
$$
\frac{\del}{\del\, dx^i} : \Omb(U) \to \Om^{\bul-1}(U)
$$  
with respect to the degree $1$ symbol $d x^i$. This operation is defined by the formula
$$
\frac{\del}{\del\, dx^i} \, \eta_{i_1 \dots i_k}(x) d x^{i_1}  d x^{i_2} \dots d x^{i_k} ~: =~
k \eta_{i i_2 \dots i_k}(x) d x^{i_2}  d x^{i_3} \dots d x^{i_k}\,,
$$
where the summation over repeated indices is assumed. 
Equivalently, $\displaystyle \frac{\del \, \eta}{\del\, dx^i}$ is the contraction of 
an exterior form $\eta$ with the local vector field $\del/ \del x^i$.

\subsubsection{A reminder of the Deligne-Getzler-Hinich(DGH) groupoid(s)}
\label{sec:DGH}
Let $L$ be a dg Lie algebra equipped with a complete descending filtration \eqref{filtr-L}.
A Maurer-Cartan element of $L$ is a degree $1$ element $\mu \in \cF_1 L$ satisfying the equation 
\begin{equation}
\label{MC-eq}
\del \mu + \frac{1}{2}[\mu, \mu] = 0. 
\end{equation}
In this paper, $\MC(L)$ denotes the set of Maurer-Cartan elements of a dg Lie algebra $L$.

Let us recall \cite[Appendix B]{BDW}, \cite{Ezra-Deligne} that the formula 
\begin{equation}
\label{action}
e^{\xi} (\mu) : = e^{[\xi, ~]} \mu - \frac{e^{[\xi, ~]} - 1}{[\xi,~]} (\del\, \xi),
\qquad \xi \in \cF_1 L^0
\end{equation}
defines an action of the group
\begin{equation}
\label{exp-L-0}
\exp(\cF_1 L^0)
\end{equation}
on the set of MC elements of $L$. In \eqref{action}, the expressions  $e^{[\xi, ~]}$ and 
$$
 \frac{e^{[\xi, ~]} - 1}{[\xi,~]}
$$
are defined by the Taylor expansion of the functions $e^x$ and $(e^x-1)/x$, respectively, around 
the point $x = 0$. We denote by $\cG(L)$ the transformation groupoid of the action \eqref{exp-L-0}.


To a dg Lie algebra $L$ (equipped with a complete filtration \eqref{filtr-L}), we can also 
associate a very useful simplicial set $\mMC_{\bul}(L)$ \cite{Ezra-infty}, \cite{Hinich:1997} with
\begin{equation}
\label{int-mMC}
\mMC_n(L) : = \MC(L \hotimes \Om_n),
\end{equation}
where 
$$
L \hotimes \Om_n  : = \varprojlim_{k} \big( (L/\cF_{k}L) \otimes \Om_n \big)
$$
and $\Om_n$ is the de Rham-Sullivan algebra of polynomial 
differential forms on the geometric simplex $\Delta^n$ (with coefficients in $\bbC$).

As the graded commutative algebra (with $1$), 
$\Om_n$ is generated by $n+1$ symbols $t_0, t_1, \dots, t_n$ of degree $0$
and $n+1$ symbols $d t_0, d t_1, \dots, d t_n$ of degree $1$ subject to the relations
$$
t_0 + t_1 + \dots + t_n = 1, \qquad d t_0 + d t_1 + \dots + d t_n = 0. 
$$
Furthermore, the differential $d_t$ on $\Om_n$ is defined by the formulas
$$
d_t (t_i) : = d t_i, \qquad  d_t (d t_i) : = 0. 
$$
For example, $\Om_0 = \bbC$ and $\Om_1 \cong \bbC[t] \,\oplus\, \bbC[t] \, dt$. 

Due to \cite[Proposition 4.1]{enhanced}, the simplicial set $\mMC_{\bul}(L)$ is a Kan complex 
(a.k.a. an $\infty$-groupoid). Moreover, due to \cite[Lemma B.2]{GM}, two MC elements $\mu$ and $\ti{\mu}$
of $L$ (i.e. $0$-cells of $\mMC_{\bul}(L)$) are connected by a $1$-cell if and only if 
they belong to the same orbit of the action \eqref{action}. In other words, we have 
the identification:
\begin{equation}
\label{pi-0-same}
\pi_0 \big( \cG(L) \big) \cong \pi_0\big( \mMC_{\bul}(L) \big).
\end{equation}

\begin{remark}
\label{rem:infinity}
The Kan complex (a.k.a. a fibrant simplicial set) $\mMC_{\bul}(L)$ is called the 
{\it Deligne-Getzler-Hinich (DGH) $\infty$-groupoid.} In this paper, we mostly use 
the ``truncation'' of  $\mMC_{\bul}(L)$, i.e. the honest (transformation) groupoid $\cG(L)$. 
\end{remark}

Recall \cite[Section 2]{GM} that, for every $L_{\infty}$-morphism $F : L \leadsto \ti{L}$, the formula
$$
F_*(\mu) : = \sum_{n=1}^{\infty}\frac{1}{n!} F_n \big((\bsi \mu)^n \big) 
$$  
defines a map of sets 
\begin{equation}
\label{F-on-MC}
F_* : \MC(L) \to \MC(\ti{L}). 
\end{equation}
Furthermore, since $F$ naturally extends to an $L_{\infty}$-morphism 
$F : L \hotimes \Om_n \leadsto \ti{L} \hotimes \Om_n$ for every $n \ge 1$, the map $F_*$ 
naturally upgrades to the morphism of simplicial sets 
\begin{equation}
\label{F-on-mMC}
F_* : \mMC_{\bul}(L) \to \mMC_{\bul}(\ti{L})
\end{equation}
for which we use the same notation. Therefore $F_*$ gives us a map of sets\footnote{Here we use the identification 
\eqref{pi-0-same}.} 
\begin{equation}
\label{pi-0-F}
\pi_0 (F_*) : \pi_0 \big( \cG(L) \big) \to \pi_0 \big( \cG(\ti{L}) \big). 
\end{equation}

\begin{remark}
\label{rem:functor}
It is not hard to see that the assignments $L \mapsto  \mMC_{\bul}(L)$ and $F \mapsto F_*$ 
define a functor from the category of filtered dg Lie algebras to the category of simplicial sets.   
\end{remark}

\subsubsection{The sheaf $\sPD^{\bul}$ of polydifferential operators and the sheaf $\sPV^{\bul}$ of polyvector fields}
\label{sec:sPD-sPV}

Let $U \subset M$ be an open coordinate subset of a manifold $M$ 
with coordinates $x^1, x^2, \dots, x^m$.

For every $k \ge -1$, the space $\sPD^{k}(U)$ consists of $\bbC$-multilinear maps 
$$
P : \cO_M(U)^{\otimes k+1} \to \cO_M(U) 
$$ 
which can be written (in local coordinates) as finite sums
\begin{equation}
\label{P-local-form}
P ~ = ~  \sum_{\al_0, \al_1, \dots, \al_k} 
P^{\al_0, \al_1, \dots, \al_k} (x) \, \del_{x^{\al_0}} \otimes  \del_{x^{\al_1}}  \otimes \dots \otimes  \del_{x^{\al_k}} \,,
\end{equation}
where $\al_j$ are multi-indices, $P^{\al_0, \al_1, \dots, \al_k} (x) \in \cO_M(U)$
and, if $\al = (i_1, \dots, i_s)$, then
$$
\del_{x^{\al}} = \del_{x^{i_1}} \del_{x^{i_2}} \dots \del_{x^{i_s}}\,.
$$

For example, $\sPD^{-1} : = \cO_M$ and $\sPD^0$ is the sheaf of differential operators on $M$. 

We do consider polydifferential operators which do not necessarily annihilate constant functions. 
For example, the usual (commutative) multiplication $m_{\cO_M}$ can be viewed as the global section of $\sPD^{1}$.

It is easy to see that the Gerstenhaber bracket \eqref{Ger-brack} is defined on sections of the sheaf 
\begin{equation}
\label{sPD-bul}
\sPD^{\bul} : = \bigoplus_{k \ge -1} \sPD^{k}\,. 
\end{equation}
Thus $\sPD^{\bul}$ is a sheaf of graded Lie algebras. 

It is also easy to see that the formula 
\begin{equation}
\label{del-Hoch-sPD}
\del^{\Hoch} : = [m_{\cO_M}\,, ~] 
\end{equation}
defines a differential on $\sPD^{\bul}$ which is compatible with the Lie bracket $[~,~]_G$. 
So we view  $\sPD^{\bul}$ as a sheaf of dg Lie algebras.  

Let us denote by $\sPV^{k}$ the sheaf of local sections of $\wedge^{k+1} TM$ and set
$$
\sPV^{\bul} : = \bigoplus_{k \ge -1} \sPV^{k}\,.
$$ 
We call $\sPV^{\bul}$ the sheaf of polyvector fields on $M$. 

Since the graded commutative algebra $\wedge^{\bul} TM(U)$ is identified with
$$
\cO_M(U)[ \te_1,  \te_2, \dots, \te_m], 
$$
where $ \te_1,  \te_2, \dots, \te_m$ are degree $1$ symbols, every polyvector field $v$
of degree $k$ has the unique expansion 
$$
v = \sum v^{i_0 i_1 \dots i_k} (x)   \te_{i_0} \te_{i_1} \dots \te_{i_k}\,, \qquad  v^{i_0 i_1 \dots i_k} (x)  \in \cO_M(U),
$$
$$
v^{i_0 \dots i_{t} i_{t+1} \dots i_k} (x)  = - v^{i_0 \dots i_{t+1} i_{t} \dots i_k} (x).
$$
The functions $v^{i_0 i_1 \dots i_k} (x)$ are called components of $v \in \sPV^k(U)$.   

Recall \cite{Koszul} that $\sPV^{\bul}$ is a sheaf of Lie algebras. The Lie bracket $[~,~]_S$ 
(known as the Schouten bracket) is defined locally by the equations
\begin{equation}
\label{Schouten}
[x^i, x^j]_S = [\te_i, \te_j]_S = 0\,, 
\qquad 
[\te_i, x^j]_S  = - [x_j, \te_i]_S = \de_i^j
\end{equation}
and the Leibniz compatibility condition with the multiplication
$$
[v, v_1 v_2]_S = [v, v_1]_S\, v_2  + (-1)^{(|v_1| +1) |v|} v_1\, [v, v_2]_S\,.
$$

Let us also recall the every polyvector field $v \in \sPV^k(U)$ can be identified 
with the polydifferential operator in $\sPD^k(U)$ which acts as
$$
a_0 \otimes a_1 \otimes \dots \otimes a_k ~ \mapsto ~ 
\sum  v^{i_0 i_1 \dots i_k} (x) (\del_{x^{i_k}} a_0)  (\del_{x^{i_{k-1}}} a_1) \dots  (\del_{x^{i_0}} a_k),
$$
where $ v^{i_0 i_1 \dots i_k} (x)$ are components of $v$ and $x^1, \dots, x^m$ are 
coordinates on $U$. 

This embedding of sheaves 
\begin{equation}
\label{HKR}
\sPV^{\bul} \hookrightarrow  \sPD^{\bul}
\end{equation}
is often called the Hochschild-Kostant-Rosenberg (HKR) map \cite{HKR}. 

Clearly, every polyvector field is a $\del^{\Hoch}$-closed polydifferential operator.
So \eqref{HKR} is a chain map from the graded sheaf $\sPV^{\bul}$ with the zero differential 
to the graded sheaf  $\sPD^{\bul}$ with the Hochschild differential $\del^{\Hoch}$. 

We claim that 
\begin{prop}[M. Kontsevich, Section 4.6.1.1, \cite{K}]
\label{prop:HKR}
The embedding \eqref{HKR} gives us a quasi-isomorphism of cochain complexes
\begin{equation}
\label{HKR-M}
\big( \sPV^{\bul}(M), 0 \big) \stackrel{\sim}{\longrightarrow}  \big( \sPD^{\bul}(M), \del^{\Hoch} \big).
\end{equation}
In other words, $H^{k} \big( \sPD^{\bul}(M), \del^{\Hoch} \big)  \cong \sPV^{k}(M)$
for all $k$. 
\end{prop}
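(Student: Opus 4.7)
The plan is to reduce the statement to a local computation and then globalize by a sheaf-theoretic argument. Since both $\sPV^{\bul}$ and $\sPD^{\bul}$ are sheaves of $\cO_M$-modules (under multiplication, say, on the first tensor slot), and the HKR map \eqref{HKR} is a morphism of complexes of sheaves, the existence of smooth partitions of unity implies that each $\sPV^q$ and each $\sPD^q$ is a fine (hence soft, hence $\G(M,-)$-acyclic) sheaf. A standard hypercohomology argument then shows that once the HKR map is known to be a quasi-isomorphism of sheaves, it induces an isomorphism on the cohomology of global sections, which is exactly the desired conclusion.

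It therefore suffices to prove that, on every coordinate chart $U \subset M$ with coordinates $x^1, \dots, x^m$, the HKR map $\sPV^{\bul}(U) \hookrightarrow \sPD^{\bul}(U)$ is a quasi-isomorphism. For this I would introduce a decreasing filtration on $\sPD^{\bul}(U)$ by the total order of derivatives appearing in a local expansion of the form \eqref{P-local-form}. The Hochschild differential $\del^{\Hoch}$ preserves this filtration, and on the associated graded it reduces to a Koszul-type differential on a polynomial algebra in commuting symbol variables $\xi_i$ dual to $\del_{x^i}$. A direct inspection of this Koszul complex shows its cohomology is concentrated on completely antisymmetric leading symbols of order one in each slot, and this subspace is precisely the image of the HKR map. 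Convergence of the associated spectral sequence then yields the local quasi-isomorphism.

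The main obstacle is the bookkeeping of the local computation: one must verify that the derivative-order filtration is compatible with $\del^{\Hoch}$, identify the associated graded differential with a Koszul differential whose cohomology is exactly $\sPV^{\bul}(U)$, and check convergence of the resulting spectral sequence. In the smooth (rather than polynomial) setting, one additionally checks that restrictions to coordinate patches and multiplication by smooth bump functions preserve the polydifferential structure, which is automatic from the $\cO_M$-module structure of $\sPD^{\bul}$. This local HKR theorem goes back to Hochschild-Kostant-Rosenberg \cite{HKR}; its adaptation to the polydifferential setting on smooth manifolds is carried out in detail in Section 4.6.1.1 of \cite{K}, to which I would defer the technical verifications.
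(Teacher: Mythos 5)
The paper does not actually prove Proposition \ref{prop:HKR}; it is quoted verbatim from Kontsevich with the citation to \cite[Section 4.6.1.1]{K} standing in for a proof, so there is no in-paper argument to compare yours against. That said, your sketch is the standard proof of this statement and is correct in outline: globalization via the fact that $\sPV^{q}$ and $\sPD^{q}$ are fine sheaves (so a stalkwise quasi-isomorphism of bounded-below complexes of $\G(M,-)$-acyclic sheaves induces an isomorphism on the cohomology of global sections), plus a local computation on a chart. Two small points. First, the derivative-order ``filtration'' you introduce is in fact a grading preserved exactly by $\del^{\Hoch}$: writing $\sPD^{k}(U) \cong \cO(U) \otimes \big(S(V)\big)^{\otimes (k+1)}$ with $V = \mathrm{span}(\del_{x^1},\dots,\del_{x^m})$, the differential acts as $\id_{\cO(U)}$ tensored with a purely combinatorial (cobar-type) differential built from the coproduct of $S(V)$ and insertion of units at the two ends, so no spectral sequence is needed -- the smooth coefficients split off as an exact tensor factor. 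Second, the entire mathematical content is then the assertion that the cohomology of that combinatorial complex $\bigoplus_{n}\big(S(V)\big)^{\otimes n}$ is $\La^{\bul}V$ concentrated on the antisymmetric first-order symbols; you dispose of this with ``direct inspection,'' which is exactly where the classical Hochschild--Kostant--Rosenberg computation lives. Deferring that step to \cite{HKR} and \cite{K} is legitimate and matches what the authors themselves do, but be aware that it is the crux rather than a technicality.
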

\begin{remark}
\label{rem:HKR}
A version of Proposition \ref{prop:HKR} in the setting of algebraic geometry 
is known as the Hochschild-Kostant-Rosenberg theorem \cite{HKR}.
\end{remark}

One has to be careful with the above identification of  $\sPV^{\bul}$ (resp. $\sPV^{\bul}(M)$)
with the corresponding subsheaf of $\sPD^{\bul}$ (resp. subspace of $\sPD^{\bul}(M)$) because 
the embedding \eqref{HKR} is compatible with the Lie brackets only {\it up to homotopy}. So in general, 
$$
[v,w]_S \neq [v, w]_{G}\,, \qquad v,w \in  \sPV^{\bul}(M).
$$

On the other hand, we have celebrated Kontsevich's formality theorem 
which states that 
\begin{thm}[M. Kontsevich, Section 7.3.5, \cite{K}]
\label{thm:K}
The exists a sequence of quasi-iso\-mor\-phisms of dg Lie algebras 
which connects $\big( \sPV^{\bul}(M), 0, [~,~]_S \big)$ with 
$\big( \sPD^{\bul}(M), \del^{\Hoch}, [~,~]_G \big)$.
\end{thm}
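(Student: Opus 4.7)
The plan is to follow Kontsevich's original strategy and split the proof into a local model on affine space followed by a globalization argument: first construct an explicit $L_{\infty}$-quasi-isomorphism
\[
\cU : \big( \sPV^{\bul}(\bbR^d), 0, [~,~]_S \big) \leadsto \big( \sPD^{\bul}(\bbR^d), \del^{\Hoch}, [~,~]_G \big),
\]
and then transport it to an arbitrary smooth manifold $M$ by an argument in formal geometry.

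For the affine model $M = \bbR^d$, I would define the Taylor coefficient $\cU_n(\ga_1, \dots, \ga_n)$ as a sum, over a finite set of admissible oriented graphs $\G$ with $n$ ``aerial'' vertices (carrying the polyvector fields $\ga_i$) and a variable number of ``ground'' vertices (carrying the arguments of the resulting polydifferential operator), of multidifferential operators $B_{\G, \ga_1, \dots, \ga_n}$ weighted by universal constants $w_\G$. Each edge of $\G$ is interpreted as a partial derivative contracting against a component of the corresponding $\ga_i$, and each weight $w_\G$ is defined as an integral of a wedge product of Kontsevich angle forms over the Fulton-MacPherson compactification $\overline{C}_{n,m}$ of the configuration space of $n$ distinct points in the open upper half-plane together with $m$ ordered points on the real axis. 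By construction, the linear component $\cU_1$ is the HKR map of Proposition \ref{prop:HKR}, so it is already a quasi-isomorphism on the level of cochain complexes.

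The analytic heart of the argument is the verification of the $L_{\infty}$-relations for $\cU$. I would apply Stokes' theorem to each integral $w_\G$ and analyze the codimension-one strata of $\partial \overline{C}_{n,m}$. These decompose into two families: strata where a subset of aerial points collide in the interior, whose contributions reassemble the Schouten bracket on the $\sPV^{\bul}$ side, and strata where a subset of points approaches the real axis and collides there, whose contributions reassemble the Hochschild differential and the Gerstenhaber bracket on the $\sPD^{\bul}$ side. Matching these boundary strata against the defining $L_{\infty}$-identity for $\cU$ reduces the problem to showing that a long list of residual boundary contributions vanish, which follows from Kontsevich's vanishing lemmas for ``bad'' graphs (those containing double edges, loops, or disconnected aerial components).

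The main obstacle is the globalization step, which does not follow formally from the $\bbR^d$ case because the formula for $\cU$ is not invariant under arbitrary changes of coordinates. I would handle this by Fedosov-type formal geometry: choose a torsion-free connection on $M$, form the bundles of $\infty$-jets of polyvector fields and polydifferential operators, assemble Fedosov resolutions of $\sPV^{\bul}(M)$ and $\sPD^{\bul}(M)$ by sections of these bundles equipped with a flat connection, and then apply the fiberwise $\cU$ of the previous paragraphs. Because $\cU$ is $GL_d$-equivariant and compatible with the formal Weyl structure, it descends to an $L_{\infty}$-morphism of the two resolutions, and the resulting zig-zag of quasi-isomorphisms yields the claimed formality. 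The subtle point is to verify that the twist by the Fedosov connection does not introduce curvature corrections that break the $L_{\infty}$-identities; controlling this requires recognizing that any such corrections are themselves expressible as weights of admissible graphs and vanish by the same mechanism as in the flat case.
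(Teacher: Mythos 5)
Your proposal is correct and follows essentially the same route the paper relies on: the paper does not prove Theorem \ref{thm:K} itself but cites Kontsevich's graph/configuration-space construction for $\bbR^m$ and, for the globalization to an arbitrary manifold, the Fedosov-resolution argument of \cite{CEFT} and \cite[Appendix A.3]{K-alg}, which is exactly the two-step strategy you outline. The only point worth sharpening is that the descent in the globalization step hinges not just on $GL_d$-equivariance but on the vanishing of the higher Taylor coefficients $\cU_n$ ($n\ge 2$) when one argument is a linear vector field, which is what keeps the Grothendieck/Fedosov connection from introducing correction terms.
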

\begin{remark}
\label{rem:global}
Paper \cite{K} gives us an explicit sequence of quasi-isomorphisms of dg Lie algebras 
connecting  $\big( \sPV^{\bul}(\bbR^m), 0, [~,~]_S \big)$ with 
$\big( \sPD^{\bul}(\bbR^m), \del^{\Hoch}, [~,~]_G \big)$. For a detailed proof 
of Theorem \ref{thm:K} for an arbitrary smooth manifold, we refer the reader to 
\cite{CEFT}, \cite[Appendix A.3]{K-alg}. 
\end{remark}

~\\

%
%
\noindent
\textbf{Acknowledgements:} This paper may be considered as an answer 
to Yuri Berest's question: ``What if the (quantum) deformation parameter has 
a non-zero degree?'' So we would like to thank Yuri for posing this question.
We would also like to thank Brian Paljug and Xiang Tang for numerous discussions related to this paper. 
V.D. acknowledges NSF grants DMS-1161867 and DMS-1501001 for a partial support. 
E. A-O. was supported by the Republic of Turkey Ministry of National Education during 
her graduate studies at Temple University and she acknowledges this support. 
A part of this paper was written when V.D. was spending his sabbatical at the University of 
Pennsylvania. V.D. is grateful to the Math Department at the University of 
Pennsylvania for perfect working conditions and for the stimulating atmosphere. 

\section{Formal deformations of an associative algebra}
\label{sec:gen-story}

Let $A$ be an associative algebra over\footnote{The general deformation theory works for any 
ground field of characteristic zero.} $\bbC$. Let us recall that formal deformations of 
$A$ with the base ring $\bbC[[\ve]]$ are in bijection with MC elements 
\begin{equation}
\label{mu-series}
\mu = \sum_{k \ge 1} \ve^k \mu_k
\end{equation} 
of the dg Lie algebra 
\begin{equation}
\label{Cbu-A-veveve}
\ve \Cbu(A) [[\ve]]
\end{equation}
and two deformations are equivalent if the corresponding MC elements are isomorphic, i.e. 
lie on the same orbit of the action \eqref{action} (with $L = \ve \Cbu(A) [[\ve]]$).

Indeed, every MC element $\mu \in \ve C^1(A) [[\ve]]$ gives us an associative 
multiplication on $A[[\ve]]$: 
\begin{equation}
\label{mu-mult}
a \bullet_{\mu} b : = a b + \mu(a,b). 
\end{equation}
Furthermore, if 
$$
\ti{\mu} = e^{[\xi, ~]} \mu - \frac{e^{[\xi, ~]} - 1}{[\xi,~]} (\del^{\Hoch}\, \xi)
$$
for some $\xi \in  \ve C^0(A) [[\ve]]$ then the operator 
$$
T_{\xi} : A[[\ve]] \to A[[\ve]]\,, 
\qquad 
T_{\xi}(a) : = a + \sum_{k=1}^{\infty} \frac{1}{k!} \xi^k(a)
$$
intertwines the multiplications $\bullet_{\mu}$ and $\bullet_{\ti{\mu}}$: 
$$
T_{\xi}(a \bullet_{\mu} b) = T_{\xi}(a) \bullet_{\ti{\mu}} T_{\xi}(b)\,, \qquad \forall ~ a, b \in A[[\ve]].  
$$

Thus equivalence classes of $1$-parameter formal deformations of $A$ are 
in bijection with elements in 
$$
\pi_0 \big( \cG( \ve \Cbu(A) [[\ve]] )  \big).
$$

MC equation \eqref{MC-eq} implies that the first term $\mu_1$ of $\mu$ in \eqref{mu-series} is 
necessarily a degree $1$ cocycle in $\Cbu(A)$. The cohomology class $\ka$ of this cocycle
in $HH^1(A)$ depends only on the isomorphism class of the MC element\footnote{In other words, 
if $\mu$ is isomorphic to $\ti{\mu}$ then the corresponding cocycles in $C^1(A)$ are cohomologous.} $\mu$. 
This cohomology class is traditionally \cite{Ger}, \cite{KS} called the Kodaira-Spencer class of $\mu$. 

MC equation \eqref{MC-eq} also implies that the Kodaira-Spencer class $\ka$ of any 
formal deformation satisfies the ``integrability'' condition 
\begin{equation}
\label{ka-ka-zero}
[\ka, \ka] = 0,
\end{equation}
where $[~ , ~]$ is the induced Lie bracket on $HH^{\bul}(A)$.

%
%

By analogy with the above ``classical'' $1$-parameter case, we define formal 
deformations of $A$ with the base ring\footnote{In this paper, we assume that the 
formal variables $\ve, \ve_1, \dots, \ve_g$ have non-positive degrees.} 
$$
\bbk : = \bbC [[\ve, \ve_1, \dots, \ve_g]]
$$
as MC elements 
\begin{equation}
\label{mu-ve-graded}
\mu =  \sum_{k_0 + k_1 + \dots + k_g \ge 1} 
\ve^{k_0} \ve_1^{k_1} \dots \ve_g^{k_g}
\mu_{k_0, k_1, \dots, k_g}\,, \qquad \mu_{k_0, k_1, \dots, k_g} \in C^{1 - (k_0 d_0 + \dots + k_g d_g)}(A) 
\end{equation}
of the dg Lie algebra 
\begin{equation}
\label{Cbu-A-ve-graded}
\mm \Cbu(A) [[\ve,  \ve_1, \dots, \ve_g]],
\end{equation} 
where  $\mm$ is the maximal ideal 
$$
\mm = (\ve, \ve_1, \dots, \ve_g) \subset  \bbC [\ve, \ve_1, \dots, \ve_g]. 
$$
Furthermore, we declare that two such deformations are equivalent if 
the corresponding MC elements are isomorphic. 

An interesting feature of such deformations is that, if at least one
formal parameter carries a non-zero degree, then the resulting 
MC element $\mu$ corresponds to (a $\bbC[[\ve,  \ve_1, \dots, \ve_g]]$-linear) 
$A_{\infty}$-structure on the graded vector space: 
\begin{equation}
\label{A-ve-graded}
A [[\ve,  \ve_1, \dots, \ve_g]]
\end{equation}
with the multiplications: 
$$
\sm^{\mu}_n (a_1, \dots, a_n) : =
\begin{cases}
\displaystyle  a_1 a_2   + \sum_{k_0 d_0 + \dots + k_g d_g = 0} 
\ve^{k_0} \ve_1^{k_1} \dots \ve_g^{k_g}
\mu_{k_0, k_1, \dots, k_g}  (a_1, a_2) \qquad {\rm if} ~~ n = 2, \\[0.8cm]
\displaystyle  \sum_{k_0 d_0 + \dots + k_g d_g = 2-n} 
\ve^{k_0} \ve_1^{k_1} \dots \ve_g^{k_g}
\mu_{k_0, k_1, \dots, k_g}  (a_1, \dots, a_n)  \qquad {\rm if} ~~ n > 2\,.
\end{cases} 
$$
Since the degrees of all formal parameters are non-positive, all non-zero 
$A_{\infty}$-multiplications have $\ge 2$ inputs. In other words, $\mu$ gives us 
a usual (i.e. non-curved) $A_{\infty}$-structure on \eqref{A-ve-graded} with the zero differential.

Just as in the $1$-parameter case, the MC equation for $\mu$ implies that the element
\begin{equation}
\label{ka-repres}
\ve \mu_{1, 0, \dots, 0} +  \ve_1 \mu_{0,1, 0, \dots, 0} + \dots + \ve_g  \mu_{0, \dots, 0, 1}
\end{equation}
is a degree $1$-cocycle in 
\begin{equation}
\label{repres-in}
\ve \Cbu(A) \oplus \ve_1 \Cbu(A) \oplus \dots \oplus \ve_g \Cbu(A) 
~\cong ~ \mm \Cbu(A) [[\ve,  \ve_1, \dots, \ve_g]] ~\big/~  \mm^2 \Cbu(A) [[\ve,  \ve_1, \dots, \ve_g]].
\end{equation}
Furthermore, isomorphic MC elements have cohomologous cocycles in \eqref{repres-in}.
As in the $1$-parameter case, we call the cohomology class $\ka$ of \eqref{ka-repres} the Kodaira-Spencer 
class of $\mu$.

%
%
\subsubsection*{The Penkava-Schwarz example} 
Let $A$ be the polynomial algebra $\bbC[x^1, \dots, x^{2n-1}]$ is $2n-1$ variables (of degree zero)
and $\ve_1$ be formal parameter of degree $3-2n$. 
Then the element
\begin{equation}
\label{example-mu}
\mu : = \ve_1 \del_{x^1} \cup  \del_{x^2} \cup \dots \cup \del_{x^{2n-1}}  ~\in ~ \mm \Cbu(A) [[\ve,  \ve_1, \dots, \ve_g]]
\end{equation}
is $\del^{\Hoch}$-closed. Furthermore,  
$$
[\mu, \mu]_{G} = 0
$$ 
since $\ve_1$ is an odd variable and hence $\ve_1^2 = 0$.    

Thus $\mu$ is a MC element of \eqref{Cbu-A-ve-graded} which gives an example 
of a deformation of $A$ in ``the $A_{\infty}$-direction''.  It is easy to see that the Kodaira-Spencer 
class of $\mu$ is non-zero. So this is an example of non-trivial deformation\footnote{A very similar
example is described in \cite[Section 3.2]{Keller} and \cite{P-Schwarz}.}.

\subsection{The case when $A$ is the algebra of functions $\cO(M)$ on a smooth manifold $M$}
\label{sec:case-cO-M} 

The general story presented above applies to the case $A = \cO(M)$ with the minor amendment: instead of the 
full Hochschild cochain complex, we use the sub- dg Lie algebra
\begin{equation}
\label{sPD-M}
\sPD^{\bul}(M)  \subset \Cbu(\cO(M)), 
\end{equation}
where $\sPD^{\bul}$ is the sheaf of polydifferential operators on $M$. 

By analogy with the $1$-parameter ``ungraded'' case (when $g = 0$) formal deformations of $\cO(M)$ 
with the base ring  $\bbC[[\ve, \ve_1, \dots, \ve_g]]$ are defined as MC elements $\mu$ of the dg Lie algebra 
\begin{equation}
\label{sPD-veveve}
\cL_{\sPD} : = \mm \sPD^{\bul}(M)[[\ve, \ve_1, \dots, \ve_g]].
\end{equation}
Furthermore, the equivalence classes of such deformations are elements of 
$$
\pi_0 \big( \cG(\cL_{\sPD})  \big),
$$
where the dg Lie algebra $\cL_{\sPD}$ is considered with the filtration 
\begin{equation}
\label{filtr-sPD-veveve}
\cF_k \cL_{\sPD} : = \mm^k \sPD^{\bul}(M)[[\ve, \ve_1, \dots, \ve_g]]. 
\end{equation}

The MC equation for $\mu$
\begin{equation}
\label{MC-mu}
\del^{\Hoch} \mu + \frac{1}{2} [\mu, \mu]_G  = 0
\end{equation}
implies that the coset of $\mu$ in 
$$
\cF_1 \cL_{\sPD} ~\big/~ \cF_2 \cL_{\sPD}
 ~ \cong ~
\ve \sPD^{\bul}(M) \oplus \ve_1 \sPD^{\bul}(M) \oplus \ve_2 \sPD^{\bul}(M) 
\oplus \dots \oplus \ve_g \sPD^{\bul}(M)
$$
is $\del^{\Hoch}$-closed and the corresponding vector in
\begin{equation}
\label{HH-veveve}
\ve \sPV^{1}(M) \oplus \ve_1  \sPV^{1-d_1}(M) \oplus \dots \oplus \ve_g  \sPV^{1-d_g}(M)
\end{equation} 
does not depend on the choice of a representative in the equivalence class of the deformation. 
We call the corresponding vector in \eqref{HH-veveve} the Kodaira-Spencer class of $\mu$. 

Since the degrees of all formal parameters are non-positive, the $A_{\infty}$-structure on  
\begin{equation}
\label{cO-ve-graded}
\cO(M)[[\ve,  \ve_1, \dots, \ve_g]]
\end{equation}
corresponding to $\mu$ has the following $\bbC[[\ve,  \ve_1, \dots, \ve_g]]$-multilinear multiplications: 
$$
\sm^{\mu}_n (a_1, \dots, a_n) : =
\begin{cases}
\displaystyle  a_1 a_2   + \sum_{k_0 d_0 + \dots + k_g d_g = 0} 
\ve^{k_0} \ve_1^{k_1} \dots \ve_g^{k_g}
\mu_{k_0, k_1, \dots, k_g}  (a_1, a_2) \qquad {\rm if} ~~ n = 2, \\[0.8cm]
\displaystyle  \sum_{k_0 d_0 + \dots + k_g d_g = 2-n} 
\ve^{k_0} \ve_1^{k_1} \dots \ve_g^{k_g}
\mu_{k_0, k_1, \dots, k_g}  (a_1, \dots, a_n)  \qquad {\rm if} ~~ n > 2\,,
\end{cases} 
$$
where $a_i \in \cO(M)$ and $\mu_{k_0, k_1, \dots, k_g}$ are the coefficients in the expansion of $\mu$
$$ 
\mu ~~ = ~~ \sum_{k_0 + k_1 + \dots + k_g \ge 1} 
\ve^{k_0} \ve_1^{k_1} \dots \ve_g^{k_g}
\mu_{k_0, k_1, \dots, k_g}\,.
$$

\subsection{The main result}
\label{sec:main}
Let $M$ be a smooth real manifold equipped with a symplectic structure $\om$ and 
$\al \in \sPV^1(M)$ be the corresponding (non-degenerate) Poisson structure. 
In other words, for every open coordinate subset $U \subset M$, we have
$$
\al^{ij}(x) \om_{jk}(x) = \de^i_k\,,
$$ 
where $\al^{ij}(x)$ (resp. $\om_{ij}(x)$) are components of $\al \big|_{U}$ (resp. $\om\big|_{U}$). 

Let us consider MC elements $\mu$ in \eqref{sPD-veveve}
satisfying these two conditions: 
\begin{cond}
\label{cond:KS}
The Kodaira-Spencer class of $\mu$ equals $\ve \al$.
\end{cond}
\begin{cond}
\label{cond:mu-ve}
The MC element $\mu$ satisfies the equation 
\begin{equation}
\label{cond-mu-ve}
\mu \big |_{\ve = 0} ~ = ~0.
\end{equation}
\end{cond}

We denote by $\ti{\cG}(\cL_{\sPD})$ the full subgroupoid of $\cG(\cL_{\sPD})$ 
whose objects are MC elements $\mu$ satisfying Conditions \ref{cond:KS} and 
\ref{cond:mu-ve}. Furthermore, we denote by $\TL$ the set of isomorphism 
classes of objects of $\ti{\cG}(\cL_{\sPD})$, i.e.
\begin{equation}
\label{TL}
\TL : = \pi_0 \big( \ti{\cG}(\cL_{\sPD})  \big).
\end{equation}
We call $\TL$ the {\it topological locus} of $\pi_0 \big( \cG(\cL_{\sPD})  \big)$. 

\begin{remark}
\label{rem:cond}
Note that every MC element $\mu$ satisfying Condition \ref{cond:mu-ve} is isomorphic 
to infinitely many MC elements of $\cL_{\sPD}$ which do not satisfy this condition. 
Indeed, consider a MC element $\mu$ which satisfies \eqref{cond-mu-ve} and 
a polydifferential operator $P \in \sPD^{- d_1}(M)$ 
for which $\del^{\Hoch} P \neq 0$. Then the MC element 
$$
e^{[\ve_1 P, ~]_G} \, \mu - \frac{e^{[\ve_1 P, ~]_G} - 1}{[\ve_1 P,~]_G}\, (\del^{\Hoch}\, \ve_1 P)
$$
does not satisfy equation \eqref{cond-mu-ve}. 
\end{remark}

Equation \eqref{cond-mu-ve} guarantees that the $A_{\infty}$-multiplications $\{ \sm_n \}_{n \ge 2}$
corresponding to the MC element $\mu$ satisfy the property
\begin{equation}
\label{mult-ve-0}
\sm_n(a_1, \dots, a_n) \big|_{\ve = 0} = 
\begin{cases}
a_1 a_2 \qquad {\rm if} ~~  n = 2 \\
0 \qquad ~~~~ {\rm otherwise}.
\end{cases}  
\end{equation}
In other words, the $A_{\infty}$-algebra 
$$
\big( \cO(M)[[\ve, \ve_1, \dots, \ve_g]], \{ \sm_n \}_{n \ge 2} \big)
$$
can be viewed as a $1$-parameter formal deformation of the graded 
commutative algebra $\cO(M)[[\ve_1, \dots, \ve_g]]$.

The goal of this note is to describe the topological locus $\TL$ of equivalence 
classes of formal deformations of $\cO(M)$ with the base ring $\bbC[[\ve, \ve_1, \dots, \ve_g]]$:
\begin{thm} 
\label{thm:main}
For every symplectic manifold $M$, the isomorphism classes of formal 
deformations of $\cO(M)$ with the base ring $\bbC[[\ve, \ve_1, \dots, \ve_g]]$ satisfying 
Conditions \ref{cond:KS} and \ref{cond:mu-ve} are in bijection with elements of the vector space 
\begin{equation}
\label{veveve-H}
\bigoplus_{q \geq 0} \frac{1}{\ve^{q-1}} \, \big( \mm \, \bs^q\, H^q(M, \bbC)[[\ve, \ve_1, \dots, \ve_g]] \big)^2\,.
\end{equation}
Here $\mm$ is the maximal ideal $(\ve, \ve_1, \dots, \ve_g) \subset \bbC[\ve, \ve_1, \dots, \ve_g]$,
$H^{\bul}(M, \bbC)$ is the singular cohomology of $M$ with coefficients in $\bbC$, and 
$\big( \mm \, \bs^q\, H^q(M, \bbC)[[\ve, \ve_1, \dots, \ve_g]] \big)^2$ is the subspace of degree $2$ elements in 
the graded vector space 
$$
\mm \, \bs^q\, H^q(M, \bbC)[[\ve, \ve_1, \dots, \ve_g]].
$$
\end{thm}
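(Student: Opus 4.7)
The proof proceeds by transferring the classification problem from $\cL_{\sPD}$ to the polyvector side and then to singular cohomology in four steps. \emph{First}, I would apply Kontsevich's formality (Theorem \ref{thm:K}) to obtain a chain of $L_\infty$-quasi-isomorphisms between $(\sPV^{\bul}(M), 0, [-,-]_S)$ and $(\sPD^{\bul}(M), \del^{\Hoch}, [-,-]_G)$. Extending each $\bbC[[\ve, \ve_1, \dots, \ve_g]]$-linearly and respecting the $\mm$-adic filtration yields an $L_\infty$-quasi-isomorphism $\cL_{\sPV} := \mm \sPV^{\bul}(M)[[\ve, \ve_1, \dots, \ve_g]] \leadsto \cL_{\sPD}$, hence a bijection $\pi_0(\cG(\cL_{\sPD})) \cong \pi_0(\cG(\cL_{\sPV}))$. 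Since the HKR map is the identity on cohomology, this bijection preserves Kodaira-Spencer classes; for Condition \ref{cond:mu-ve}, one observes that the restriction of the formality map at $\ve=0$ is itself a formality, so every equivalence class on the $\sPD$-side admits a representative with $\mu|_{\ve=0}=0$ iff the corresponding class on the $\sPV$-side does, yielding $\pi_0(\ti{\cG}(\cL_{\sPD})) \cong \pi_0(\ti{\cG}(\cL_{\sPV}))$.

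\emph{Second}, on the polyvector side an MC element satisfying both conditions has the unique form $\mu^{PV} = \ve \al + \ve \eta$ with $\eta \in \mm \sPV^{\bul}(M)[[\ve, \ve_1, \dots, \ve_g]]$. The MC equation $\tfrac{1}{2}[\mu^{PV}, \mu^{PV}]_S = 0$ expands, using $[\al, \al]_S = 0$, and after cancelling $\ve^2$ becomes
\[
[\al, \eta]_S + \tfrac{1}{2}[\eta, \eta]_S = 0,
\]
the MC equation in the dg Lie algebra
\[
\mL := \big( \mm \sPV^{\bul}(M)[[\ve, \ve_1, \dots, \ve_g]],\; [\al, -]_S,\; [-,-]_S \big).
\]
A direct expansion verifies that the gauge action of $\chi \in \cF_1 \cL_{\sPV}^0$ on $\mu^{PV}$ corresponds, under the substitution $\mu^{PV} \leftrightarrow \eta$, to the standard gauge action of the same $\chi$ on $\eta$ in $\mL$. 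Hence $\pi_0(\ti{\cG}(\cL_{\sPV})) \cong \pi_0(\cG(\mL))$.

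\emph{Third}, I would invoke the construction of Sections \ref{sec:PV-PD} and \ref{sec:PV} (inspired by Sharygin-Talalaev) to build an $L_\infty$-quasi-isomorphism from $\mL$ to its cohomology equipped with zero differential and zero bracket. The symplectic form provides an isomorphism $\sPV^{\bul}(M) \cong \Om^{\bul+1}(M)$ sending $[\al,-]_S$ to the de Rham differential, so by de Rham's theorem $H^{\bul}(\mL) \cong \mm H^{\bul+1}(M, \bbC)[[\ve, \ve_1, \dots, \ve_g]]$. Because the target is abelian, its MC elements are precisely the degree-$1$ elements and gauge equivalence is trivial, so $\pi_0(\cG(\mL))$ is identified with the set of degree-$1$ elements of $\mm H^{\bul+1}(M, \bbC)[[\ve, \ve_1, \dots, \ve_g]]$.

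\emph{Fourth}, applying the $\bs^q$-suspensions to each $H^q(M,\bbC)$-component and accounting for the $\ve$ factor stripped in the second step (which produces the $\tfrac{1}{\ve^{q-1}}$ rescaling) identifies this set of degree-$1$ cohomology classes with the graded vector space in the statement. The main obstacle is the third step: establishing the $L_\infty$-formality of the twisted polyvector dg Lie algebra $\mL$ in the symplectic setting. The HKR-type chain map is not a Lie algebra homomorphism, so explicit higher brackets witnessing the formality must be constructed, and one must verify that the induced bracket on cohomology vanishes; this is precisely what Sections \ref{sec:PV-PD} and \ref{sec:PV} are designed to provide.
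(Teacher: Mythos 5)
Your proposal is correct and follows essentially the same route as the paper: Kontsevich's formality together with the Goldman--Millson theorem to pass to polyvector fields, stripping off $\ve\al$ to land in the twisted polyvector dg Lie algebra, and the symplectic identification with exterior forms together with the Sharygin--Talalaev operator $\exp(\Pi)$ to abelianize. The only differences are organizational: the paper performs the shift by $\mu_{\al}=\cU_*(\ve\al)$ on the polydifferential side first and then applies the twisted morphism $\cU^{\al}$ (rather than transferring with the untwisted morphism and shifting by $\ve\al$ on the polyvector side afterwards), and it produces a \emph{strict} $L_\infty$-isomorphism onto the abelianization $(\ti{\cL}_{\Om},-d,0)$ of the same complex rather than a quasi-isomorphism onto the cohomology with vanishing operations.
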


The proof of Theorem \ref{thm:main} is given in Section \ref{sec:proof} and it 
is based on two auxiliary constructions. The first construction is presented in 
Section \ref{sec:PV-PD} and its main ingredient is Kontsevich's formality 
quasi-isomorphism \cite{K} for polydifferential operators. The second construction
is presented in Section \ref{sec:PV} and it is inspired by a result \cite{ShT} due to 
G. Sharygin and D. Talalaev.

\section{Applying Kontsevich's formality theorem}
\label{sec:PV-PD}

Let us fix an $L_{\infty}$-quasi-isomorphism 
\begin{equation}
\label{cU-fixed}
\cU :   \sPV^{\bul}(M) \leadsto  \sPD^{\bul}(M) 
\end{equation}
whose linear term coincides with the embedding \eqref{HKR-M}. 
Let us also extend it by  $\bbC[[\ve, \ve_1, \dots, \ve_g]]$-linearity to 
\begin{equation}
\label{cU-veveve}
\cU :  \mm\, \sPV^{\bul}(M)[[\ve, \ve_1, \dots, \ve_g]] \leadsto \mm  \sPD^{\bul}(M)[[\ve, \ve_1, \dots, \ve_g]] 
\end{equation}
and denote by $\mu_{\al}$ the following MC element of \eqref{sPD-veveve} 
\begin{equation}
\label{mu-al}
\mu_{\al} ~: =~ \cU_*(\ve \al).
\end{equation}

Twisting \eqref{cU-veveve} by $\al$, we get an $L_{\infty}$-morphism 
\begin{equation}
\label{cU-al}
\cU^{\al} :   \big( \mm \sPV^{\bul}(M)[[\ve, \ve_1, \dots, \ve_g]], [\ve\al, ~]_S  \big)
\leadsto  \cL^{\mu_{\al}}_{\sPD}\,,
\end{equation}
where the dg Lie algebra $\cL^{\mu_{\al}}_{\sPD}$ is obtained from $\cL_{\sPD}$ via replacing 
the differential $\del^{\Hoch}$ by
$$
\del^{\Hoch} + [\mu_{\al},~]_G\,. 
$$ 

Notice that 
$$
\del^{\Hoch} + [\mu_{\al},~]_G = \del^{\Hoch}_*\,, 
$$
where $\del^{\Hoch}_*$ is the Hochschild differential corresponding to the star product
\begin{equation}
\label{star-alpha}
a * b : = a b + \mu_{\al}(a,b).
\end{equation}
Furthermore, the dg Lie algebra $\cL^{\mu_{\al}}_{\sPD}$ carries the 
same descending filtration as $\cL_{\sPD}$
\begin{equation}
\label{filtr-sPD-twisted}
\cF_k \cL^{\mu_{\al}}_{\sPD} : =  \mm^k \sPD^{\bul}(M)[[\ve, \ve_1, \dots, \ve_g]].
\end{equation}

It is easy to see that the formula 
\begin{equation}
\label{Shift-dfn}
\Shift_{\mu_{\al}} (\ti{\mu}) : = \mu_{\al} +  \ti{\mu} 
\end{equation}
defines a bijection from the set of MC elements of $\cL^{\mu_{\al}}_{\sPD}$ 
to the set of MC elements of $\cL_{\sPD}$ \eqref{sPD-veveve}. 
A simple direct computation\footnote{For the version of this statement in the setting 
of $L_{\infty}$-algebras and the corresponding DGH $\infty$-groupoids, we refer the reader to 
\cite[Lemma 4.3]{enhanced}.} shows that for every degree zero element $\xi \in \cL^{\mu_{\al}}_{\sPD}$
and for every  $\ti{\mu} \in \MC(\cL^{\mu_{\al}}_{\sPD})$
$$
\Shift_{\mu_{\al}} \Big( \,
e^{[\xi, ~]_G} \ti{\mu} - \frac{e^{[\xi, ~]_G} - 1}{[\xi,~]_G} (\del^{\Hoch} \xi + [\mu_{\al},\xi]_G) \, \Big) ~=~
e^{[\xi, ~]_G} (\mu_{\al} +\ti{\mu}) ~ - ~\frac{e^{[\xi, ~]_G} - 1}{[\xi,~]_G} (\del^{\Hoch} \xi).
$$
In other words, $\Shift_{\mu_{\al}}$ upgrades to a functor 
\begin{equation}
\label{Shift-functor}
\Shift_{\mu_{\al}} : \cG( \cL^{\mu_{\al}}_{\sPD} ) \to \cG( \cL_{\sPD})
\end{equation}
which acts ``as identity'' on the set of morphisms. It is not hard to see that 
\eqref{Shift-functor} is actually a \und{strict} isomorphism of groupoids and the inverse 
functor operates on objects as
\begin{equation}
\label{Shift-inv}
\mu \mapsto \mu - \mu_{\al} ~: ~\MC(\cL_{\sPD}) \to \MC(\cL^{\mu_{\al}}_{\sPD}).
\end{equation}

Let us denote by $\ti{\cL}_{\sPD}$ the following sub- dg Lie algebra of $\cL^{\mu_{\al}}_{\sPD}$
\begin{equation}
\label{ve-mm-sPD}
\ti{\cL}_{\sPD} : = \big( \ve\, \mm \, \sPD^{\bul}(M)[[\ve, \ve_1, \dots, \ve_g]], \, \del^{\Hoch}_*,\,  [~,~]_G \big) 
\end{equation}
and observe that the filtration from $\cL^{\mu_{\al}}_{\sPD}$ induces the descending filtration on $\ti{\cL}_{\sPD}$: 
\begin{equation}
\label{tilde-sPD-filtr}
\cF_k \ti{\cL}_{\sPD} : = \cF_k \cL^{\mu_{\al}}_{\sPD} \cap \ti{\cL}_{\sPD} = \ve \, \mm^{k-1} \, \sPD^{\bul}(M)[[\ve, \ve_1, \dots, \ve_g]].
\end{equation}

Next, we denote by $\ti{\cG}(\cL^{\mu_{\al}}_{\sPD})$ the full subgroupoid of $\cG(\cL^{\mu_{\al}}_{\sPD})$
whose set of objects is $\MC(\ti{\cL}_{\sPD})$. Moreover, we set 
\begin{equation}
\label{TL-tw}
\TL^{\tw} : = \pi_0 \big( \ti{\cG}(\cL^{\mu_{\al}}_{\sPD}) \big).
\end{equation}

Let us prove that 
\begin{prop}
\label{prop:MC-in-quest}
The restriction of the functor \eqref{Shift-functor}
to the subgroupoid $\ti{\cG}(\cL^{\mu_{\al}}_{\sPD})$ 
induces a bijection 
$$
\TL^{\tw} \cong \TL\,.
$$
\end{prop}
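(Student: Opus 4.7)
The plan is to exploit the strict isomorphism $\Shift_{\mu_\al}: \cG(\cL^{\mu_\al}_{\sPD})\to\cG(\cL_{\sPD})$ established just before the proposition, together with the fact that $\ti{\cG}$ on both sides is by definition a full subgroupoid cut out by a condition on objects. Since fullness means that morphisms between objects of the subgroupoid coincide with morphisms in the ambient groupoid, the bijection on $\pi_0$ will follow once I check two complementary facts: first, that every $\mu_\al+\ti{\mu}$ with $\ti{\mu}\in\MC(\ti{\cL}_{\sPD})$ satisfies Conditions \ref{cond:KS} and \ref{cond:mu-ve}; and second, that every equivalence class in $\TL$ has a representative $\mu$ for which $\mu-\mu_\al$ lies in $\MC(\ti{\cL}_{\sPD})$.

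For the first fact, I would expand $\mu_\al=\cU_*(\ve\al)=\sum_{n\ge 1}\frac{\ve^n}{n!}\cU_n\big((\bsi\al)^{\otimes n}\big)$. Every term carries a positive power of $\ve$, and $\ti{\mu}\in\ve\mm\sPD^{\bul}(M)[[\ve,\ve_1,\dots,\ve_g]]$ also vanishes at $\ve=0$, so Condition \ref{cond:mu-ve} is immediate. For Condition \ref{cond:KS}, using that $\ve\mm\subset\mm^2$ and that the linear term of $\cU$ is the HKR embedding, one finds $\mu_\al+\ti{\mu}\equiv\ve\al$ modulo $\mm^2\sPD^{\bul}(M)[[\ve,\ve_1,\dots,\ve_g]]$, so the Kodaira--Spencer class is $\ve\al$.

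For the second fact, given $\mu$ satisfying both conditions I would write
$$
\mu=\ve\mu_{1,0,\dots,0}+\sum_{i=1}^g\ve_i\mu_{0,\dots,1_i,\dots,0}+(\text{terms in }\mm^2\sPD^{\bul}(M)[[\ve,\ve_1,\dots,\ve_g]]).
$$
Condition \ref{cond:mu-ve} forces $\mu_{0,\dots,1_i,\dots,0}=0$ for every $i\ge 1$, while Condition \ref{cond:KS} supplies $\xi_0\in\sPD^0(M)$ with $\mu_{1,0,\dots,0}=\al+\del^{\Hoch}\xi_0$. I would then apply the action \eqref{action} with gauge parameter $\ve\xi_0\in\cF_1\cL_{\sPD}^0$ to produce an equivalent MC element $\mu'$; modulo $\mm^2$ this action reduces to subtracting $\del^{\Hoch}(\ve\xi_0)=\ve\del^{\Hoch}\xi_0$, so the new $\ve$-coefficient is $\al$ exactly. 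Since $\ve\xi_0$ vanishes at $\ve=0$, the explicit formula \eqref{action} also shows that $\mu'|_{\ve=0}=\mu|_{\ve=0}=0$, so Condition \ref{cond:mu-ve} is preserved. Consequently $\mu'-\mu_\al$ lies in $\ve\mm\sPD^{\bul}(M)[[\ve,\ve_1,\dots,\ve_g]]$, i.e.\ in $\MC(\ti{\cL}_{\sPD})$, supplying the required preimage. The main obstacle is precisely this second step: the cohomologically trivial discrepancy $\mu_{1,0,\dots,0}-\al$ must be absorbed by a gauge transformation without destroying Condition \ref{cond:mu-ve}. The resolution is that the needed gauge parameter $\ve\xi_0$ automatically vanishes at $\ve=0$, so both conditions survive simultaneously, and this is precisely what promotes a well-defined map on $\pi_0$ into a bijection.
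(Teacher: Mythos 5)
Your proof is correct and follows essentially the same route as the paper: both directions rest on the strict isomorphism $\Shift_{\mu_{\al}}$ together with fullness of the subgroupoids, the easy verification that $\mu_{\al}+\ti{\mu}$ satisfies Conditions \ref{cond:KS} and \ref{cond:mu-ve}, and, for surjectivity, a gauge transformation by a parameter divisible by $\ve$ that normalizes the linear term to $\ve\al$ while preserving Condition \ref{cond:mu-ve}. Your explicit choice $\xi=\ve\xi_0$ with $\del^{\Hoch}\xi_0=\mu_{1,0,\dots,0}-\al$ is simply a concrete instance of the paper's assertion that the required $\xi$ can be found in $(\ve)\,\sPD^{\bul}(M)[[\ve,\ve_1,\dots,\ve_g]]$.
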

\begin{proof} Let $\ti{\mu}$ be a MC element of $\ti{\cL}_{\sPD}$. 
It is clear that the Kodaira-Spencer class of $\mu_{\al} + \ti{\mu}$ coincides 
with the Kodaira-Spencer class of $\mu_{\al}$. Hence  $\mu_{\al} + \ti{\mu}$ satisfies 
Condition \ref{cond:KS}. 

Since 
$$
\mu_{\al} \big|_{\ve =0}  ~ = ~ \ti{\mu} \big|_{\ve =0} ~=~ 0, 
$$
the MC element $\mu_{\al} + \ti{\mu}$ also satisfies Condition \ref{cond:mu-ve}. 

Thus restricting $\Shift_{\mu_{\al}}$ to the full sub-groupoid $\ti{\cG}(\cL^{\mu_{\al}}_{\sPD})$,  
we get a functor 
\begin{equation}
\label{Shift-restr}
\ti{\cG}(\cL^{\mu_{\al}}_{\sPD}) \to \ti{\cG}( \cL_{\sPD}).
\end{equation}
Hence we get a map 
\begin{equation}
\label{TL-tw-TL}
\TL^{\tw} \to \TL.
\end{equation}

Using the functor \eqref{Shift-inv} from $\cG(\cL_{\sPD})$ to $\cG(\cL^{\mu_{\al}}_{\sPD})$, it is easy 
show that the map \eqref{TL-tw-TL} is one-to-one. So it remains to show that, for every 
$\mu \in \MC(\cL_{\sPD})$ satisfying Conditions \ref{cond:KS} and \ref{cond:mu-ve}, there exists 
$\ti{\mu} \in \MC(\ti{\cL}_{\sPD})$ such that $\mu$ is isomorphic to  $\mu_{\al} + \ti{\mu}$. 

Since the Kodaira-Spencer classes of $\mu$ and $\mu_{\al}$ coincide, the coset of the difference $\mu - \mu^{\al}$ in 
$$
\mm \sPD^{\bul}(M)[[\ve, \ve_1, \dots, \ve_g]] ~\big/~ \mm^2 \sPD^{\bul}(M)[[\ve, \ve_1, \dots, \ve_g]]
$$
is $\del^{\Hoch}$-exact. 

Hence there exists a degree zero vector $\xi \in \mm \sPD^{\bul}(M)[[\ve, \ve_1, \dots, \ve_g]]$ such that 
\begin{equation}
\label{exp-xi-mu-mu-al}
\Big( e^{[\xi, ~]_G} \mu - \frac{e^{[\xi, ~]_G} - 1}{[\xi,~]_G} (\del^{\Hoch}\, \xi) \Big) ~ - ~ \mu_{\al} \in  \mm^2 \sPD^{\bul}(M)[[\ve, \ve_1, \dots, \ve_g]].
\end{equation}
Moreover, since $\mu \big|_{\ve = 0} = 0$, the vector $\xi$, for which \eqref{exp-xi-mu-mu-al} holds, can be 
found in 
$$
(\ve) \sPD^{\bul}(M)[[\ve, \ve_1, \dots, \ve_g]]. 
$$

Therefore, 
$$
\Big( e^{[\xi, ~]_G} \mu - \frac{e^{[\xi, ~]_G} - 1}{[\xi,~]_G} (\del^{\Hoch}\, \xi) \Big) ~ - ~ \mu_{\al} \in  \ve \, \mm \sPD^{\bul}(M)[[\ve, \ve_1, \dots, \ve_g]].
$$
In other words, the MC element 
$$
\ti{\mu} : = \Big( e^{[\xi, ~]_G} \mu - \frac{e^{[\xi, ~]_G} - 1}{[\xi,~]_G} (\del^{\Hoch}\, \xi) \Big) ~ - ~ \mu_{\al}
$$
of $\cL^{\mu_{\al}}_{\sPD}$ belongs to the sub- dg Lie algebra $\ti{\cL}_{\sPD}$. 

The MC element $\mu_{\al} + \ti{\mu}$ of $\cL_{\sPD}$ is isomorphic to $\mu$ by construction. 

Thus the proposition is proved. 
\end{proof}

\bigskip \bigskip

To describe the set $\TL^{\tw}$, we denote by $\cL_{\sPV}$ and $\ti{\cL}_{\sPV}$ the dg Lie algebra 
$$
\cL_{\sPV} : = \big(\mm \, \sPV^{\bul}(M)[[\ve, \ve_1, \dots, \ve_g]], \, [\ve \al, ~]_S ,\,  [~,~]_S \big)
$$
and its sub- dg Lie algebra 
\begin{equation}
\label{ve-mm-sPV}
\ti{\cL}_{\sPV} : =  \ve\, \mm \, \sPV^{\bul}(M)[[\ve, \ve_1, \dots, \ve_g]] ~ \subset ~ \cL_{\sPV}\,,
\end{equation}
respectively. 

We consider $\cL_{\sPV}$ and $\ti{\cL}_{\sPV}$ with the following descending filtrations: 
\begin{equation}
\label{filtr-mm-sPV}
\cF_k \cL_{\sPV} : = \mm^k \, \sPV^{\bul}(M)[[\ve, \ve_1, \dots, \ve_g]],
\end{equation}
\begin{equation}
\label{filtr-ve-mm-sPV}
\cF_k \ti{\cL}_{\sPV} ~: = ~ \ti{\cL}_{\sPV} \cap \cF_k \cL_{\sPV} ~ = ~
\ve\, \mm^{k-1} \, \sPV^{\bul}(M)[[\ve, \ve_1, \dots, \ve_g]].
\end{equation}

Furthermore, we denote by $\ti{\cG}(\cL_{\sPV})$ the full subgroupoid of $\cG(\cL_{\sPV})$ 
whose set of objects is $\MC(\ti{\cL}_{\sPV})$ and set 
\begin{equation}
\label{TL-sPV}
\TL_{\sPV} : = \pi_0\big( \ti{\cG}(\cL_{\sPV})\big).
\end{equation}

Restricting $\cU^{\al}$ \eqref{cU-al} to the sub- dg Lie algebra $\ti{\cL}_{\sPV}$, we get an $L_{\infty}$-morphism 
\begin{equation}
\label{cU-al-needed}
\cU^{\al} : \ti{\cL}_{\sPV} \leadsto \ti{\cL}_{\sPD}\,.
\end{equation}

Let us prove that 
\begin{claim}
\label{cl:GM-OK}
The $L_{\infty}$-morphism \eqref{cU-al} (resp. \eqref{cU-al-needed}) is  
compatible with the filtrations \eqref{filtr-sPD-twisted}, \eqref{filtr-mm-sPV}
(resp. \eqref{tilde-sPD-filtr}, \eqref{filtr-ve-mm-sPV}) in the sense of \eqref{compat-w-filtr}.
Moreover the linear terms of the $L_{\infty}$-morphisms \eqref{cU-al}
and \eqref{cU-al-needed} give us quasi-isomorphisms of cochain complexes
$$
\cF_k \cL_{\sPV} \stackrel{\sim}{~\longrightarrow~} \cF_k \cL^{ \mu_{\al} }_{\sPD}\,,
$$
$$
\cF_k \ti{\cL}_{\sPV} \stackrel{\sim}{~\longrightarrow~} \cF_k \ti{\cL}_{\sPD}
$$
for every $k \ge 1$, respectively. 
\end{claim}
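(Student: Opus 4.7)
The plan is to derive both assertions by unpacking the standard twisting construction and then running a filtration-by-filtration quasi-isomorphism argument.

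First, for the filtration compatibility, I would write out the twisted structure maps explicitly:
$$
\cU^\al_n(\gamma_1, \dots, \gamma_n) \;=\; \sum_{k \geq 0} \frac{1}{k!}\, \cU_{n+k}\bigl((\ve\al)^k, \gamma_1, \dots, \gamma_n\bigr).
$$
Because each $\cU_{n+k}$ was extended to be $\bbC[[\ve, \ve_1,\dots,\ve_g]]$-multilinear, the factor $\ve^k$ pulls out of the $k$-th summand. So if $\gamma_i \in \cF_{k_i}\cL_{\sPV} = \mm^{k_i}\sPV^{\bul}(M)[[\ve,\ve_1,\dots,\ve_g]]$, the $k$-th summand lies in $\ve^k \mm^{k_1+\dots+k_n}\sPD^{\bul}(M)[[\ve,\ve_1,\dots,\ve_g]] \subset \cF_{k_1+\dots+k_n}\cL^{\mu_\al}_{\sPD}$, which is exactly the compatibility \eqref{compat-w-filtr}. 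Restricting to arguments that already carry a factor of $\ve$ gives the $\ti{\cL}$-version immediately.

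Second, to prove that the linear term $\cU^\al_1 : \cF_k\cL_{\sPV} \to \cF_k\cL^{\mu_\al}_{\sPD}$ is a quasi-isomorphism, I would filter both sides by $\{\cF_{k+j}\}_{j\ge 0}$ and look at the associated graded. On the source, the Lie bracket $[\ve\al, -]_S$ strictly increases filtration, so the $E_0$-differential vanishes; on the target, $[\mu_{\al},-]_G$ also strictly increases filtration, so the $E_0$-differential is just $\del^{\Hoch}$. Moreover, in the expansion of $\cU^\al_1(\gamma) = \cU_1(\gamma) + \sum_{k\ge 1}\tfrac{1}{k!}\cU_{1+k}((\ve\al)^k,\gamma)$, every term with $k\geq 1$ carries a $\ve^k$ and hence strictly raises filtration, so the induced map on $\gr^{k+j}$ is simply
$$
(\mm^{k+j}/\mm^{k+j+1}) \otimes \mathrm{HKR} \;:\; (\mm^{k+j}/\mm^{k+j+1}) \otimes \sPV^{\bul}(M) \,\longrightarrow\, (\mm^{k+j}/\mm^{k+j+1}) \otimes \sPD^{\bul}(M),
$$
which is a quasi-isomorphism by Proposition \ref{prop:HKR} and flatness of $\mm^{k+j}/\mm^{k+j+1}$ over $\bbC$.

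Third, I would promote this graded-level statement to a quasi-isomorphism on $\cF_k$ itself by induction on truncation plus an inverse-limit argument. Write $C_n := \cF_k\cL_{\sPV}/\cF_{k+n}\cL_{\sPV}$ and $D_n := \cF_k\cL^{\mu_\al}_{\sPD}/\cF_{k+n}\cL^{\mu_\al}_{\sPD}$. The base $n=1$ is the graded statement above. For the inductive step, apply the five-lemma to the morphism between the long exact sequences in cohomology coming from
$$
0 \to \cF_{k+n}/\cF_{k+n+1} \to C_{n+1} \to C_n \to 0, \qquad
0 \to \cF_{k+n}/\cF_{k+n+1} \to D_{n+1} \to D_n \to 0.
$$
Finally, pass to the inverse limit: the filtrations on $\cF_k\cL_{\sPV}$ and $\cF_k\cL^{\mu_\al}_{\sPD}$ are complete, and the transition maps in both towers are surjective on cochains, hence surjective (in particular Mittag-Leffler) on cohomology, so the $\lim^1$-terms in the Milnor exact sequences vanish and
$$
H^{\bul}(\cF_k\cL_{\sPV}) \;\cong\; \varprojlim_n H^{\bul}(C_n) \;\cong\; \varprojlim_n H^{\bul}(D_n) \;\cong\; H^{\bul}(\cF_k\cL^{\mu_\al}_{\sPD}).
$$
The same argument, restricted to arguments divisible by $\ve$, yields the corresponding statement for $\ti{\cL}_{\sPV} \to \ti{\cL}_{\sPD}$.

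The main technical obstacle I expect is the inverse-limit step: the descending filtrations are not bounded above in cohomological degree, so one must verify the Mittag-Leffler condition carefully. Luckily, since the truncation maps $C_{n+1} \twoheadrightarrow C_n$ (and similarly for $D$) are surjective already on the cochain level, the condition is automatic; the remainder of the argument is a combination of standard homological algebra and the fact, already encoded in Proposition \ref{prop:HKR}, that HKR is a quasi-isomorphism.
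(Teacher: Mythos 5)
Your overall strategy is the same as the paper's: check filtration compatibility directly from the twisting formula, pass to the associated graded where the twisted differentials and the higher Taylor components of $\cU^{\al}$ drop out leaving the HKR map tensored with $\mm^{k}/\mm^{k+1}$-type factors, invoke Proposition \ref{prop:HKR} together with exactness of tensoring over $\bbC$, and then upgrade the associated-graded quasi-isomorphism to one on the complete filtered complexes. The only real difference is that the paper outsources this last upgrade to a citation (the ``Lemma on filtered complexes,'' \cite[Lemma D.1, App.\ D]{Deligne-TW}), whereas you reprove it by hand via truncations, the five lemma, and Milnor exact sequences. That is a legitimate and more self-contained route, but it contains one incorrect step: you assert that surjectivity of the transition maps $C_{n+1}\twoheadrightarrow C_n$ on cochains forces surjectivity (hence Mittag--Leffler) of the induced maps on cohomology, and conclude that the $\varprojlim^1$-terms vanish. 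That implication is false in general: a cocycle in $C_n$ lifts to a cochain in $C_{n+1}$ whose differential lands in $\cF_{k+n}/\cF_{k+n+1}$ but need not vanish, so cohomology classes need not lift. Fortunately you do not need the $\varprojlim^1$-terms to vanish. Surjectivity on cochains is enough to have the Milnor sequence
$$
0 \to {\varprojlim_n}^1\, H^{\bul-1}(C_n) \to H^{\bul}\big(\varprojlim_n C_n\big) \to \varprojlim_n H^{\bul}(C_n) \to 0
$$
for both towers, and your five-lemma induction shows that $\{H^{\bul}(C_n)\}_n \to \{H^{\bul}(D_n)\}_n$ is an isomorphism of towers, hence induces isomorphisms on both $\varprojlim$ and $\varprojlim^1$; a final application of the five lemma to the map of Milnor sequences then gives $H^{\bul}(\cF_k\cL_{\sPV}) \cong H^{\bul}(\cF_k\cL^{\mu_{\al}}_{\sPD})$ without any Mittag--Leffler hypothesis on cohomology. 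With that repair your argument is complete and is essentially an expanded version of the paper's proof.
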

\begin{proof}
The $L_{\infty}$-morphisms \eqref{cU-al} and \eqref{cU-al-needed} are compatible with 
the filtrations by construction. So we proceed to the second statement.

For every fixed $k \ge 1$ the dg Lie algebras $\cF_k \cL_{\sPV}$, $\cF_k \ti{\cL}_{\sPV}$, 
$\cF_k \cL^{ \mu_{\al} }_{\sPD}$, and $\cF_k \ti{\cL}_{\sPD}$ are equipped with the complete 
descending filtrations. For example, 
$$
\cF_k \cL_{\sPV} \supset \cF_{k+1} \cL_{\sPV} \supset \cF_{k+2} \cL_{\sPV} \supset \dots
$$

The linear term $\cU^{\al}_1$ gives us chain maps
\begin{equation}
\label{no-tilde}
(\cF_k \cL_{\sPV}, [\ve \al,~ ]_S ) ~\to~ (\cF_k \cL^{ \mu_{\al} }_{\sPD}, \del^{\Hoch}_*)
\end{equation}
and
\begin{equation}
\label{yes-tilde}
(\cF_k \ti{\cL}_{\sPV}, [\ve \al,~ ]_S ) ~\to~ (\cF_k \ti{\cL}_{\sPD}, \del^{\Hoch}_*)
\end{equation}
compatible with these filtrations. As above, $\del^{\Hoch}_*$ is the Hochschild differential 
corresponding to the star product \eqref{star-alpha}. 

At the level of associated graded complexes, we get the chain maps 
\begin{equation}
\label{no-tilde-Gr}
J_{HKR} : \big( \mm^k \,\sPV^{\bul}(M)[\ve, \ve_1, \dots, \ve_g] , 0 \big) ~\to~ 
\big( \mm^k\, \sPD^{\bul}(M)[\ve, \ve_1, \dots, \ve_g], \del^{\Hoch} \big)
\end{equation} 
and
\begin{equation}
\label{yes-tilde-Gr}
J_{HKR} : \big( \ve \mm^{k-1} \,\sPV^{\bul}(M)[\ve, \ve_1, \dots, \ve_g] , 0 \big) ~\to~ 
\big( \ve \mm^{k-1}\, \sPD^{\bul}(M)[\ve, \ve_1, \dots, \ve_g], \del^{\Hoch} \big)\,,
\end{equation}
where $\del^{\Hoch}$ is the Hochschild differential corresponding to the usual (commutative)
multiplication on $\cO_M$ and $J_{HKR}$ is the Hochschild-Kostant-Rosenberg embedding 
\eqref{HKR} extended by linearity with respect to $\bbC[\ve, \ve_1, \dots, \ve_g]$.

Since the functors $\otimes_{\bbC}\, \mm^k$ and  $\otimes_{\bbC}\, \ve \mm^{k-1}$
preserve quasi-isomorphisms of cochain complexes, Proposition \ref{prop:HKR} implies 
that \eqref{no-tilde-Gr} and \eqref{yes-tilde-Gr} are quasi-isomorphisms. 

Hence, by the Lemma on filtered complexes (see, for example, \cite[Lemma D.1, App. D]{Deligne-TW}), 
the chain maps \eqref{no-tilde} and \eqref{yes-tilde} are quasi-isomorphisms of cochain complexes. 

Claim \ref{cl:GM-OK} is proved. 
\end{proof}

Let us prove that 
\begin{prop}
\label{prop:to-PV}
The $L_{\infty}$-morphism \eqref{cU-al-needed} induces a map 
\begin{equation}
\label{TL-PV-wt-TL}
\TL_{\sPV}  \to  \TL^{\tw}\,.
\end{equation}
Furthermore, this map is a bijection. 
\end{prop}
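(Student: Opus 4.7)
The plan is to sandwich the map of Proposition \ref{prop:to-PV} between two bijections coming from the filtered Goldman-Millson theorem for $L_\infty$-morphisms: one for the full pair $(\cL_{\sPV},\cL^{\mu_{\al}}_{\sPD})$ and one for the tilded pair $(\ti{\cL}_{\sPV},\ti{\cL}_{\sPD})$. The main non-trivial ingredient is the filtered Goldman-Millson theorem combined with the quasi-isomorphism statement of Claim \ref{cl:GM-OK}; the remaining bijectivity then emerges from a diagram chase exploiting the fact that $\ti{\cG}(-)$ is a full subgroupoid of $\cG(-)$.

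First, I would check well-definedness. Compatibility of $\cU^{\al}$ with the filtrations \eqref{filtr-ve-mm-sPV} and \eqref{tilde-sPD-filtr} (Claim \ref{cl:GM-OK}), together with the fact that MC elements lie in $\cF_1$, forces $\cU^{\al}_*(\MC(\ti{\cL}_{\sPV}))\subset\MC(\ti{\cL}_{\sPD})$. Functoriality \eqref{pi-0-F} of the DGH groupoid, applied to the ambient morphism $\cU^{\al}:\cL_{\sPV}\leadsto\cL^{\mu_{\al}}_{\sPD}$, then guarantees that gauge-equivalent MC elements go to gauge-equivalent images, so the rule $[\mu]\mapsto[\cU^{\al}_*(\mu)]$ descends to a map $\bar{\cU^{\al}_*}:\TL_{\sPV}\to\TL^{\tw}$.

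Next, I would invoke the filtered Goldman-Millson theorem twice. By Claim \ref{cl:GM-OK}, the linear term of $\cU^{\al}$ is a quasi-isomorphism on each $\cF_k$, for both the full and the tilded pair. The filtered Goldman-Millson theorem for $L_\infty$-morphisms (see e.g.\ \cite{enhanced} and references therein) therefore yields bijections
\begin{equation*}
\cU^{\al}_*:\pi_0(\cG(\cL_{\sPV}))\xrightarrow{\sim}\pi_0(\cG(\cL^{\mu_{\al}}_{\sPD})), \qquad \cU^{\al}_*:\pi_0(\cG(\ti{\cL}_{\sPV}))\xrightarrow{\sim}\pi_0(\cG(\ti{\cL}_{\sPD})).
\end{equation*}

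Finally, I would sandwich $\bar{\cU^{\al}_*}$ between these two bijections. Since $\ti{\cG}(\cL_{\sPV})$ is a \emph{full} subgroupoid of $\cG(\cL_{\sPV})$, the inclusion $\iota_V:\TL_{\sPV}\to\pi_0(\cG(\cL_{\sPV}))$ is injective, and likewise $\iota_D:\TL^{\tw}\to\pi_0(\cG(\cL^{\mu_{\al}}_{\sPD}))$ is injective. On the other hand, because $\cF_1\ti{\cL}_{\sPV}^0\subset\cF_1\cL_{\sPV}^0$ and every class in $\TL_{\sPV}$ has a representative in $\MC(\ti{\cL}_{\sPV})$, the natural map $p_V:\pi_0(\cG(\ti{\cL}_{\sPV}))\to\TL_{\sPV}$ is surjective, and similarly $p_D$ is surjective. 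The squares $\iota_D\circ\bar{\cU^{\al}_*}=\cU^{\al}_*\circ\iota_V$ and $\bar{\cU^{\al}_*}\circ p_V=p_D\circ\cU^{\al}_*$ commute by construction. Injectivity of $\bar{\cU^{\al}_*}$ then follows from the first identity (the composition $\iota_D\circ\bar{\cU^{\al}_*}$ is injective as a composition of bijection and injection, forcing $\bar{\cU^{\al}_*}$ injective), and surjectivity from the second (the composition $\bar{\cU^{\al}_*}\circ p_V$ is surjective as a composition of bijection and surjection, forcing $\bar{\cU^{\al}_*}$ surjective).
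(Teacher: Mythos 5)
Your proof is correct and follows essentially the same route as the paper: well-definedness via the ambient morphism and the functoriality of the DGH construction, surjectivity from the filtered Goldman--Millson theorem applied to the restricted morphism $\ti{\cL}_{\sPV}\leadsto\ti{\cL}_{\sPD}$, and injectivity from the same theorem applied to the full morphism $\cL_{\sPV}\leadsto\cL^{\mu_{\al}}_{\sPD}$, using that the $\ti{\cG}(-)$'s are full subgroupoids. The paper leaves your explicit diagram chase implicit and cites \cite[Theorem 1.1]{GM} (rather than \cite{enhanced}) for the Goldman--Millson input, but the argument is the same.
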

\begin{proof}
To prove the first statement, we observe that the $L_{\infty}$-morphism \eqref{cU-al-needed}
gives a map 
\begin{equation}
\label{cU-MC}
\cU^{\al}_* : \MC( \ti{\cL}_{\sPV}) \to \MC(\ti{\cL}_{\sPD}). 
\end{equation}
Let  $\mu_1$ and $\mu_2$ be MC elements of $\ti{\cL}_{\sPV}$ 
connected by a $1$-cell 
$$
\eta \in  \mMC_{1} (\cL_{\sPV}) = \MC \big( \cL_{\sPV} \hotimes \bbC[t] \oplus  \cL_{\sPV} \hotimes \bbC[t]\, d t \big),
$$
i.e.
$$
\eta\big|_{t = dt = 0} = \mu_1, \qquad \eta\big|_{t = 1, dt = 0} = \mu_2.
$$

Clearly, the $1$-cell
$$
\cU^{\al}_* (\eta) ~ \in~   \mMC_{1} (\cL_{\sPD})
$$
connects the MC elements 
$$
\cU^{\al}_*(\mu_1),~ \cU^{\al}_*(\mu_2)~ \in ~\ti{\cL}_{\sPD}. 
$$

Thus \eqref{cU-MC} descends to the map of sets 
$$
\TL_{\sPV}  \to  \TL^{\tw}\,.
$$

Let us now show that this map is a bijection. 

According to Claim \ref{cl:GM-OK}, both $L_{\infty}$-morphisms  \eqref{cU-al}
and \eqref{cU-al-needed} satisfy conditions of \cite[Theorem 1.1]{GM}. 
So, applying this theorem to \eqref{cU-al-needed}, we conclude that 
the map \eqref{TL-PV-wt-TL} is surjective. 
Furthermore, applying  \cite[Theorem 1.1]{GM} to \eqref{cU-al}, we conclude that 
the map \eqref{TL-PV-wt-TL} is injective. 
\end{proof}

\section{Passing to exterior forms}
\label{sec:PV}

Let $f \in \cO(M)$ and $\te_i : = \del_{x^i}$ be the local vector field 
defined in a coordinate chart of $M$. Since the symplectic structure $\om$
is non-degenerate,  the formulas
$$
\sJ_{\om} (f) : = f, \qquad 
\sJ_{\om} (\te_i) : = \frac{1}{\ve} \om_{ij}(x) d x^j 
$$
define an isomorphism of (shifted) graded commutative algebras
\begin{equation}
\label{J-om}
\sJ_{\om} :  \mm \, \sPV(M)  [[\ve, \ve_1, \dots, \ve_g]] ~\to~ 
\bigoplus_{q \ge 0} \frac{1}{\ve^{q}}  \bsi \, \mm \, \Om^q(M) [[\ve, \ve_1, \dots, \ve_g]]. 
\end{equation}
The inverse of $\sJ_{\om}$ is given by
\begin{equation}
\label{J-om-inv}
\sJ^{-1}_{\om} (f) = f, \qquad 
\sJ^{-1}_{\om} (dx^i)  = \ve \al^{ij}(x) \te_j\,,
\end{equation}
where $\al^{ij}(x)$ are components of the corresponding Poisson structure (in local coordinates). 

Here, we tacitly assume that every vector $\eta \in \Om^q(M)$ carries degree $q$.
The latter means that the vector $\eta\, \ve^{k_1}_1 \dots  \ve^{k_g}_g$ carries degree 
$q + k_1 d_1 + \dots + k_g d_g$ and the vector $\bsi \eta \, \ve^{k_1}_1 \dots  \ve^{k_g}_g$ carries degree 
$q -1 + k_1 d_1 + \dots + k_g d_g$.

Using this isomorphism and the dg Lie algebra structure $([\ve \al, ~]_S, [~,~]_S)$ on $\cL_{\sPV}$, 
we equip the graded vector space 
\begin{equation}
\label{cL-Om}
\cL_{\Om} : = \bigoplus_{q \ge 0} \frac{1}{\ve^{q}}  \bsi \,\mm \, \Om^q(M) [[\ve, \ve_1, \dots, \ve_g]] 
\end{equation}
with the structure of a dg Lie algebra. 

A direct computation shows that the differential on $\cL_{\Om}$ corresponding to $[\ve \al, ~]_S$ is 
$$
-d
$$
(where $d$ is the de Rham differential) and 
the Lie bracket $[~,~]_{\om}$ corresponding to $[~,~]_S$ is given 
by the formulas (in local coordinates)
$$
[\bsi \eta_1,  \bsi \eta_2]_{\om} : =
$$
\begin{equation}
\label{brack-on-forms}
 \bsi \ve dx^k \del_{x^k} \al^{ij}(x) 
\frac{\del \, \eta_1}{\del d x^i} \frac{\del \, \eta_2}{\del d x^j} 
-(-1)^{|\eta_1|} \bsi \ve \al^{ij}(x) \frac{\del \, \eta_1}{\del d x^i}\, \del_{x^j} \eta_2 
+ \bsi \ve \al^{ij}(x) (\del_{x^i} \eta_1) \frac{\del \, \eta_2}{ \del d x^j}\,.
\end{equation}
For example, 
\begin{equation}
\label{brack-on-forms-easy}
[\bsi f, \bsi g]_{\om} : = 0, \qquad 
[\bsi d x^i, \bsi f]_{\om} : = \bsi \ve\, \al^{ij}(x) (\del_{x^j} f), 
\end{equation}
$$
[\bsi d x^i,  \bsi d x^j]_{\om} : =   \bsi \ve \, d x^k \del_{x^k} \al^{ij}(x),
$$
where $f,g \in \cO(M)$.

It is clear that, restricting $\sJ_{\om}$ to the sub- dg Lie algebra 
\eqref{ve-mm-sPV}, we get an isomorphism of dg Lie algebras 
\begin{equation}
\label{ticL-PV-ticL-Om}
\sJ_{\om} :  \ti{\cL}_{\sPV} \stackrel{\cong}{\longrightarrow}  \ti{\cL}_{\Om}\,,
\end{equation}
where $\ti{\cL}_{\Om}$ is the sub- dg Lie algebra of $\cL_{\Om}$: 
\begin{equation}
\label{ti-cL-Om}
\ti{\cL}_{\Om} : = \bigoplus_{q \ge 0} \frac{1}{\ve^{q-1}}  \bsi \,\mm \, \Om^q(M) [[\ve, \ve_1, \dots, \ve_g]]. 
\end{equation}

Both dg Lie algebras \eqref{cL-Om} and \eqref{ti-cL-Om} are equipped with the obvious 
complete descending filtrations
\begin{equation}
\label{filtr-cL-Om}
\cF_k \cL_{\Om} : = \bigoplus_{q \ge 0} \frac{1}{\ve^{q}}  \bsi \,\mm^k \, \Om^q(M) [[\ve, \ve_1, \dots, \ve_g]], 
\end{equation}
\begin{equation}
\label{filtr-ti-cL-Om}
\cF_k \ti{\cL}_{\Om} : = \bigoplus_{q \ge 0} \frac{1}{\ve^{q-1}}   \bsi \,\mm^{k-1} \, \Om^q(M) [[\ve, \ve_1, \dots, \ve_g]]. 
\end{equation}
Moreover, the isomorphisms \eqref{J-om} and \eqref{ticL-PV-ticL-Om} are compatible with these filtrations. 

We denote by $\ti{\cG}(\cL_{\Om})$ the full subgroupoid of $\cG(\cL_{\Om})$ whose 
set of objects is $\MC(\ti{\cL}_{\Om})$ and set 
\begin{equation}
\label{TL-Om}
\TL_{\Om} : = \pi_0 \big( \ti{\cG}(\cL_{\Om}) \big).
\end{equation}

Using the above properties of the isomorphism $J_{\om}$ we easily deduce that 
\begin{claim}
\label{cl:TL-Om}
The map $J_{\om}$ gives us an isomorphism of groupoids
$$
(J_{\om})_* :  \ti{\cG}(\cL_{\sPV})  \stackrel{\cong}{\, \longrightarrow \,}  \ti{\cG}(\cL_{\Om})
$$
and hence a bijection 
$$
\TL_{\sPV} \stackrel{\cong}{\, \longrightarrow \,} \TL_{\Om}\,.
$$
\qed
\end{claim}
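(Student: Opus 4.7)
The plan is to observe that Claim \ref{cl:TL-Om} is essentially a formal consequence of the fact that $\sJ_{\om}$ is a \emph{strict} isomorphism of filtered dg Lie algebras, once we recall how the dg Lie structure on $\cL_{\Om}$ was set up.

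First, I would note that the differential $-d$ and the bracket $[~,~]_{\om}$ on $\cL_{\Om}$ were defined precisely by transport of structure along the isomorphism of graded vector spaces $\sJ_{\om}$ in \eqref{J-om}. Therefore, tautologically, $\sJ_{\om}$ upgrades to an isomorphism of dg Lie algebras
\[
\sJ_{\om} : \big(\cL_{\sPV}, [\ve\al,~]_S, [~,~]_S\big) \stackrel{\cong}{\longrightarrow} \big(\cL_{\Om}, -d, [~,~]_{\om}\big).
\]
Moreover, by \eqref{filtr-mm-sPV} and \eqref{filtr-cL-Om}, this isomorphism strictly matches the complete descending filtrations term by term, since $\sJ_{\om}$ is $\bbC[[\ve,\ve_1,\dots,\ve_g]]$-linear and simply rescales by powers of $\ve$.

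Next, any strict isomorphism of filtered dg Lie algebras $\phi: L \to L'$ induces a bijection $\MC(L) \cong \MC(L')$ on Maurer-Cartan sets, and an isomorphism $\exp(\cF_1 L^0) \cong \exp(\cF_1 (L')^0)$ of the corresponding gauge groups that intertwines the actions \eqref{action} (because the formulas for $\del\mu + \tfrac{1}{2}[\mu,\mu]$ and for $e^{[\xi,~]}\mu - \tfrac{e^{[\xi,~]}-1}{[\xi,~]}(\del\xi)$ are built entirely out of the dg Lie operations and the filtration). Applying this to $\sJ_{\om}$, I obtain a strict isomorphism of transformation groupoids
\[
(\sJ_{\om})_* : \cG(\cL_{\sPV}) \stackrel{\cong}{\longrightarrow} \cG(\cL_{\Om}).
\]

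Finally, I would restrict to the full subgroupoids in question. Using the already-established restriction \eqref{ticL-PV-ticL-Om}, the isomorphism $\sJ_{\om}$ identifies $\MC(\ti{\cL}_{\sPV})$ with $\MC(\ti{\cL}_{\Om})$, so it carries the full subgroupoid $\ti{\cG}(\cL_{\sPV})$ onto $\ti{\cG}(\cL_{\Om})$. Passing to isomorphism classes and invoking the definitions \eqref{TL-sPV} and \eqref{TL-Om} yields the desired bijection $\TL_{\sPV} \cong \TL_{\Om}$. There is no real obstacle here: the only thing to check with any care is that the bracket formula \eqref{brack-on-forms} is genuinely the push-forward of the Schouten bracket under $\sJ_{\om}$, but this is a short direct computation using \eqref{Schouten} and \eqref{J-om-inv}, and is effectively what was already asserted when $[~,~]_{\om}$ was introduced.
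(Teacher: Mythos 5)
Your proposal is correct and follows exactly the route the paper intends (the paper states the claim with only the remark that it follows ``easily'' from the properties of $\sJ_{\om}$): the dg Lie structure on $\cL_{\Om}$ is defined by transport of structure along $\sJ_{\om}$, so the strict filtered dg Lie isomorphism identifies the MC sets and gauge actions, hence the transformation groupoids, and restricts via \eqref{ticL-PV-ticL-Om} to the full subgroupoids, giving the bijection on $\pi_0$. Nothing is missing.
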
 
\begin{remark}
\label{rem:forms-shifted}
Note that the grading on $\cL_{\Om}$ and $\ti{\cL}_{\Om}$ comes with 
an additional shift. For example, every degree zero exterior form 
$\ve \, \eta \in \ve \Om^0(M) = \ve \cO(M)$ gives us the degree $-1$ 
vector $\bsi \ve\,\eta \in \cL_{\Om}$. In particular, monomials 
in $S^n(\bsi \ti{\cL}_{\Om})$ will be written as 
$$
\bs^{-2} \eta_1 \, \bs^{-2} \eta_2 \, \dots \, \bs^{-2} \eta_n,
$$
where $\eta_1, \eta_2, \dots, \eta_n$ belong to the space
\begin{equation}
\label{before-bsi}
\bigoplus_{q \ge 0} \frac{1}{\ve^{q-1}} \, \mm \, \Om^q(M) [[\ve, \ve_1, \dots, \ve_g]]. 
\end{equation}
\end{remark}

Let us consider the cocommutative coalgebra 
\begin{equation}
\label{S-ti-cL}
\und{S}(\bsi \ti{\cL}_{\Om})
\end{equation} 
with the degree $1$ coderivations $Q_{-d}$ and $Q_{[~,~]_{\om}}$ where 
$Q_{-d}$ (resp. $Q_{[~,~]_{\om}}$) comes from the dg Lie algebra structure 
$(-d, [~,~] = 0)$ (resp. $(\del =0, [~,~]_{\om})$) on $\ti{\cL}_{\Om}$  in the sense 
of \eqref{CE-Q}.  For example, 
\begin{equation}
\label{Q-d-exam}
Q_{-d} (\bs^{-2} \eta_1 \, \bs^{-2} \eta_2) = - \bsis d \eta_1 \,  \bsis \eta_2 - (-1)^{|\eta_1|} \bsis \eta_1  \,  \bsis d \eta_2\,, 
\end{equation}
and 
\begin{equation}
\label{Q-brack-exam}
Q_{[~,~]_{\om}} (\bs^{-2} \eta_1 \,  \bs^{-2} \eta_2) : = 
(-1)^{|\eta_1|} \bs^{-2}\, \ve \, dx^k \del_{x^k} \al^{ij}(x) \frac{\del \eta_1}{\del dx^i} \frac{\del \eta_2}{\del dx^j}
\end{equation}
$$
- \bs^{-2}\, \ve \, \al^{ij}(x) \frac{\del \eta_1}{\del d x^i} \del_{x^j} \eta_2
+ (-1)^{|\eta_1|}  \bs^{-2}\, \ve \, \al^{ij}(x) (\del_{x^i} \eta_1) \frac{\del \eta_2}{\del dx^j}\,,
$$
where $|\eta_1|$ is the degree of $\eta_1$ in \eqref{before-bsi}. 

Using the idea of \cite{ShT}, we consider the coderivation $\Pi$ of the coalgebra \eqref{S-ti-cL}
defined in local coordinates by the formulas
\begin{equation}
\label{Pi}
p \circ \Pi   (\bs^{-2} \eta_1 \, \bs^{-2} \eta_2 ) ~ : =~
(-1)^{|\eta_1|}  \bs^{-2}\, \ve  \al^{ij}(x) \Big( \frac{\del}{\del d x^i } \eta_1 \Big) \Big(\frac{\del}{\del d x^j } \eta_2 \Big)\,,    
\end{equation}
$$
p \circ \Pi   (\bs^{-2} \eta_1 \, \bs^{-2} \eta_2 \, \dots \,  \bs^{-2} \eta_n ) ~ : = 0, \qquad 
\textrm{if} ~~~ n \neq 2,
$$
where $p$ is the canonical projection $\und{S}( \bsi \ti{\cL}_{\Om} )  \to \bsi \ti{\cL}_{\Om}$. 

Properties of the coderivation $\Pi$ are listed in the following proposition:
\begin{prop}
\label{prop:Pi}
The coderivation $\Pi$ has degree $0$. Furthermore, 
\begin{equation}
\label{Pi-nice}
\Pi \circ Q_{-d}  - Q_{-d} \circ \Pi = Q_{[~,~]_{\om}}
\end{equation}
and 
\begin{equation}
\label{Pi-with-Q-bracket}
\Pi \circ  Q_{[~,~]_{\om}} =  Q_{[~,~]_{\om}} \circ \Pi.
\end{equation}
\end{prop}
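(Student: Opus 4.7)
The degree statement is checked by a direct count using the shift conventions of Remark \ref{rem:forms-shifted}: for $\eta \in \Omega^q(M)$, the vector $\bsis \eta$ has degree $q - 2$, so $\bsis \eta_1 \bsis \eta_2$ has degree $|\eta_1| + |\eta_2| - 4$, while $\bsis\, \ve\, \alpha^{ij}\bigl(\tfrac{\del}{\del dx^i}\eta_1\bigr)\bigl(\tfrac{\del}{\del dx^j}\eta_2\bigr)$ is a form of total degree $|\eta_1| + |\eta_2| - 2$ and hence sits in degree $|\eta_1| + |\eta_2| - 4$ after a single $\bsi$. Hence $\deg \Pi = 0$.

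For the two identities, the plan is to exploit that all sides are coderivations of the cofree cocommutative coalgebra $\und{S}(\bsi \ti{\cL}_\Omega)$, hence uniquely determined by their corestrictions along the canonical projection $p : \und S(\bsi \ti{\cL}_\Omega) \to \bsi \ti{\cL}_\Omega$. Since $\Pi$ and $Q_{[~,~]_\om}$ lower the symmetric degree by $1$ while $Q_{-d}$ preserves it, the corestriction of $\Pi \circ Q_{-d} - Q_{-d} \circ \Pi - Q_{[~,~]_\om}$ is supported on $S^2(\bsi \ti{\cL}_\Omega)$, and the corestriction of $\Pi \circ Q_{[~,~]_\om} - Q_{[~,~]_\om} \circ \Pi$ is supported on $S^3(\bsi \ti{\cL}_\Omega)$. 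The task therefore reduces to checking the two identities after applying $p$ to $\bsis \eta_1 \bsis \eta_2$ and to $\bsis \eta_1 \bsis \eta_2 \bsis \eta_3$, respectively.

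For \eqref{Pi-nice}, one expands $Q_{-d}(\bsis \eta_1 \bsis \eta_2)$ via \eqref{Q-d-exam}, then applies $p \circ \Pi$; in parallel one applies $-d$ to $\ve\, \alpha^{ij}\bigl(\tfrac{\del \eta_1}{\del dx^i}\bigr)\bigl(\tfrac{\del \eta_2}{\del dx^j}\bigr)$ via the Leibniz rule. The latter produces three groups of terms: one carrying the derivative $dx^k\,\del_{x^k} \alpha^{ij}$ of the Poisson tensor (matching the first term of \eqref{brack-on-forms}), and two terms involving a composition $d\circ \tfrac{\del}{\del dx^i}$. The key input is the Cartan-type identity
\begin{equation*}
d \circ \frac{\del}{\del dx^i} + \frac{\del}{\del dx^i} \circ d = \del_{x^i},
\end{equation*}
which rewrites these latter two terms as a difference whose first half cancels the contribution of $p \circ \Pi \circ Q_{-d}$ and whose second half assembles into the remaining two terms of \eqref{brack-on-forms}. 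The signs come out correctly once one tracks the prefactor $(-1)^{|\eta_1|}$ built into \eqref{Pi} together with the Koszul signs of the coderivation extension on $S^2$.

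For \eqref{Pi-with-Q-bracket}, expanding $\Pi$ and $Q_{[~,~]_\om}$ via their coderivation extensions to $S^3$ yields sums indexed by the choice of which pair of inputs is contracted or bracketed first. Organizing the result into monomials proportional to $\bigl(\tfrac{\del}{\del dx^{i}}\eta_1\bigr)\bigl(\tfrac{\del}{\del dx^{j}}\eta_2\bigr)\bigl(\tfrac{\del}{\del dx^{k}}\eta_3\bigr)$, one finds that, once the contributions involving second derivatives of the $\eta_a$'s have cancelled pairwise, the coefficient of such a monomial reduces to the cyclic expression $\alpha^{il}\,\del_{x^l}\alpha^{jk} + \alpha^{jl}\,\del_{x^l}\alpha^{ki} + \alpha^{kl}\,\del_{x^l}\alpha^{ij}$, which vanishes by the coordinate form of the Jacobi identity $[\alpha,\alpha]_S = 0$. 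The main obstacle is the sign bookkeeping at the $S^3$ level: one must combine the Koszul signs from the coderivation extensions on $\und{S}(\bsi \ti{\cL}_\Omega)$ with the $(-1)^{|\eta|}$ factors embedded in \eqref{Pi} and \eqref{Q-brack-exam} and verify that all sign permutations conspire to yield precisely the cyclic Jacobi sum.
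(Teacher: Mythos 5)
Your plan is correct and follows essentially the same route as the paper's Appendix~\ref{app:Pi}: reduce both identities to their corestrictions on $S^2$ and $S^3$ respectively, verify \eqref{Pi-nice} by a direct computation in local coordinates, and reduce \eqref{Pi-with-Q-bracket} to the cyclic Jacobi identity for $\al$ after the terms with derivatives hitting the $\eta_a$'s cancel. Your Cartan-type identity $d\circ\frac{\del}{\del\, dx^i}+\frac{\del}{\del\, dx^i}\circ d=\del_{x^i}$ is just a tidy repackaging of the cancellation the paper carries out explicitly between the terms \eqref{unwanted}, \eqref{unwanted1} and \eqref{unwanted11}, \eqref{unwanted111}.
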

\begin{remark}
\label{rem:Pi}
By keeping track of terms of negative powers of $\ve$, it is easy 
to see that, in general, 
$$
p \circ \Pi  \big(  \bsi \cL_{\Om} \otimes  \bsi \cL_{\Om} \big) ~\not\subset~  \bsi \cL_{\Om}\,.
$$
However 
\begin{equation}
\label{Pi-is-good}
p \circ \Pi  \big(  \bsi \ti{\cL}_{\Om} \otimes  \bsi \cL_{\Om} \big) ~\subset~  \bsi \cL_{\Om}
\end{equation}
and we will use the inclusion \eqref{Pi-is-good} to extend the coderivation $\Pi$ to 
the coalgebra 
\begin{equation}
\label{S-L}
\und{S}(\bsi L),
\end{equation}
where $L$ is the following graded vector space
\begin{equation}
\label{L-dfn}
L : =  \ti{\cL}_{\Om} \hotimes \bbC[t] ~\oplus~  \cL_{\Om} \hotimes \bbC[t] dt\,,
\end{equation}
$\hotimes$ is the completed tensor product and $\bbC[t] \oplus  \bbC[t] dt$ is the 
algebra of polynomial de Rham forms on the $1$-simplex. We will freely use the obvious 
generalizations of \eqref{Pi-nice} and \eqref{Pi-with-Q-bracket} to the corresponding 
coderivations of the coalgebra \eqref{S-L}. 
\end{remark}

The proof of Proposition \ref{prop:Pi} is given in Appendix \ref{app:Pi}. 
Here we use this proposition to deduce the following statement:
\begin{cor}
\label{cor:exp-Pi}
The formula 
\begin{equation}
\label{exp-Pi}
\exp(\Pi) : = 1 + \sum_{m \ge 1} \frac{1}{m!} \Pi^m
\end{equation}
defines a \und{strictly} invertible $L_{\infty}$ quasi-isomorphism 
$$
(\ti{\cL}_{\Om}, -d, 0)  \leadsto (\ti{\cL}_{\Om}, -d, [~,~]_{\om}). 
$$
This isomorphism extends, in the obvious way, to a strictly invertible 
$L_{\infty}$ quasi-isomorphism 
$$
(L, -d + d_t, 0) \leadsto (L, -d + d_t, [~,~]_{\om}),
$$
where $L$ is defined in \eqref{L-dfn} and $d_t$ is the de Rham differential $dt \del_t$ on
$\bbC[t] \,\oplus\, \bbC[t] \, dt$.
\end{cor}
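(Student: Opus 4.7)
The plan is to deduce the corollary as a formal consequence of Proposition \ref{prop:Pi}, treating $\Pi$ as the ``logarithm'' of the desired $L_\infty$ morphism and exploiting the standard $\ad$-identity $\exp(X)\, Y\, \exp(-X) = \exp(\ad_X)(Y)$. First I would observe that $\Pi$ strictly lowers the cosymmetric degree (it sends $S^n(\bsi \ti{\cL}_{\Om})$ into $S^{n-1}(\bsi \ti{\cL}_{\Om})$ and vanishes on $S^1$), so the series $\exp(\Pi) := \sum_m \Pi^m/m!$ reduces to a finite sum on each $\und{S}^n(\bsi \ti{\cL}_{\Om})$ and is strictly invertible with inverse $\exp(-\Pi)$.

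Next I would verify that $\exp(\Pi)$ is a coalgebra homomorphism. Because $\Pi$ is a coderivation of the cocommutative coalgebra $\und{S}(\bsi \ti{\cL}_{\Om})$, one has $\D \circ \Pi = (\Pi \otimes 1 + 1 \otimes \Pi) \circ \D$; the two summands commute on $\und{S}(\bsi \ti{\cL}_{\Om})^{\otimes 2}$, so
$$
\D \circ \exp(\Pi) \;=\; \exp(\Pi \otimes 1 + 1 \otimes \Pi) \circ \D \;=\; \bigl(\exp(\Pi) \otimes \exp(\Pi)\bigr) \circ \D.
$$
To check compatibility with the codifferentials, I would use Proposition \ref{prop:Pi} to write $[\Pi, Q_{-d}] = Q_{[~,~]_{\om}}$ and $[\Pi, Q_{[~,~]_{\om}}] = 0$. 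The $\ad$-identity then terminates after one step:
$$
\exp(\Pi) \circ Q_{-d} \circ \exp(-\Pi) \;=\; Q_{-d} + [\Pi, Q_{-d}] + \tfrac{1}{2}\,[\Pi,[\Pi, Q_{-d}]] + \cdots \;=\; Q_{-d} + Q_{[~,~]_{\om}},
$$
which is exactly the assertion that $\exp(\Pi)$ intertwines $Q_{-d}$ and $Q_{-d} + Q_{[~,~]_{\om}}$, i.e.\ is an $L_\infty$ morphism from $(\ti{\cL}_{\Om}, -d, 0)$ to $(\ti{\cL}_{\Om}, -d, [~,~]_{\om})$. Since $\Pi$ vanishes on $S^1(\bsi \ti{\cL}_{\Om})$, its linear term is the identity, so $\exp(\Pi)$ is tautologically a quasi-isomorphism.

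For the extension to $L = \ti{\cL}_{\Om} \hotimes \bbC[t] \,\oplus\, \cL_{\Om} \hotimes \bbC[t]\,dt$, I would invoke the half-sided inclusion \eqref{Pi-is-good} from Remark \ref{rem:Pi}: although $\Pi$ does not preserve $\bsi \cL_{\Om}$, it does preserve $\bsi L$ because the $\bbC[t]\,dt$-factors square to zero and every surviving pair involves at least one factor from $\ti{\cL}_{\Om}$. The same $\ad$-computation then applies once $Q_{-d}$ is replaced by $Q_{-d + d_t}$, and the only extra verification is that $[\Pi, Q_{d_t}] = 0$, which holds because $\Pi$ is $\bbC[t]$-linear (the Poisson tensor $\al$ and the partial derivatives defining $\Pi$ do not involve $t$). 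The substantive obstacle therefore lies outside this argument, inside Proposition \ref{prop:Pi} itself, where the bracket identities \eqref{Pi-nice} and \eqref{Pi-with-Q-bracket} must be verified by a direct local coordinate calculation; once Proposition \ref{prop:Pi} is granted, the corollary reduces to formal manipulations with coderivations.
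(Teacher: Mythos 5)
Your proposal is correct and follows essentially the same route as the paper: both reduce the corollary to the two bracket identities of Proposition \ref{prop:Pi}, the only cosmetic difference being that you invoke the Hadamard identity $\exp(\Pi)\, Q_{-d}\, \exp(-\Pi) = \exp(\ad_\Pi)(Q_{-d})$ directly (terminating after one step since $[\Pi, Q_{[~,~]_{\om}}]=0$), whereas the paper proves the same identity by comparing the two sides of \eqref{exp-Pi-good} as solutions of the ODE $\frac{d}{du}\Psi = Q_{[~,~]_{\om}}$ in an auxiliary parameter $u$. Your treatment of the extension to $L$ via \eqref{Pi-is-good} and the vanishing of $dt\cdot dt$ likewise matches Remark \ref{rem:Pi} and the paper's closing remark that ``the similar argument'' applies.
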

\begin{proof} 
It is straightforward to show that the equation \eqref{exp-Pi} defines automorphisms
of the cocommutative coalgebras \eqref{S-ti-cL} and \eqref{S-L} (considered with the zero 
differentials). The inverse of  \eqref{exp-Pi} is given by the formula:
\begin{equation}
\label{exp-Pi-inv}
\exp(- \Pi) : = 1 + \sum_{m \ge 1} \frac{(-1)^m}{m!} \Pi^m\,.
\end{equation}

It remains to prove that 
$$
\exp(\Pi)\circ Q_{-d} = (Q_{-d} + Q_{[~,~]_{\om}}) \circ  \exp(\Pi)
$$
or equivalently
\begin{equation}
\label{exp-Pi-good}
\exp(\Pi)\circ Q_{-d} \circ  \exp(-\Pi) = Q_{-d} + Q_{[~,~]_{\om}}\,.
\end{equation}

For this purpose we introduce two elements 
$$
\Psi_L, \Psi_R \in \Hom\big(\und{S}(\bsi \ti{\cL}_{\Om})\,,\, \und{S}(\bsi \ti{\cL}_{\Om})[u] \big) 
$$
defined by 
\begin{equation}
\label{Psi-s}
\Psi_L : = \exp(u\, \Pi)\circ Q_{-d} \circ  \exp(-u\, \Pi),
\qquad 
\Psi_R : =  Q_{-d} +  u\, Q_{[~,~]_{\om}}\,,
\end{equation}
where $u$ is an auxiliary variable of degree $0$. 

Using \eqref{Pi-nice} and \eqref{Pi-with-Q-bracket} we get
$$
\frac{d}{du} \Psi_L = \exp(u \Pi) \circ  \big( \Pi \circ Q_{-d} - Q_{-d} \circ \Pi \big) \circ \exp(-u \Pi) 
$$
$$
= \exp(u \Pi) \circ  Q_{[~,~]_{\om}} \circ \exp(-u \Pi) =  Q_{[~,~]_{\om}}.  
$$

Hence both $\Psi_L$ and $\Psi_R$ satisfy the same formal ordinary differential 
equation 
$$
\frac{d}{d u} \Psi =  Q_{[~,~]_{\om}}
$$ 
with the same initial condition $\Psi \big|_{u = 0} = Q_{-d}$.

Therefore, $\Psi_L = \Psi_R$ and \eqref{exp-Pi-good} follows. 

The similar argument shows that the same operator $\exp(\Pi)$
defines an $L_{\infty}$ quasi-isomorphism 
$$
(L, -d + d_t, 0) \leadsto (L, -d + d_t, [~,~]_{\om}).
$$

Thus the corollary is proved. 
\end{proof}
~\\ 

\section{The proof of Theorem \ref{thm:main}}
\label{sec:proof}

Let us denote by $\ti{\cG} (\cL_{\Om}, -d, 0)$  the 
full subgroupoid of $\cG (\cL_{\Om}, -d, 0)$ 
whose set of objects is $\MC(\ti{\cL}_{\Om}, -d, 0)$.

Clearly, 
\begin{equation}
\label{TL0}
\pi_0 \big( \ti{\cG} (\cL_{\Om}, -d, 0) \big) 
~\cong~ \bigoplus_{q \geq 0} \frac{1}{\ve^{q-1}} \, \big( \mm \, \bs^q\, H^q(M, \bbC)[[\ve, \ve_1, \dots, \ve_g]] \big)^2,
\end{equation}
where $ \big( \mm \, \bs^q\, H^q(M, \bbC)[[\ve, \ve_1, \dots, \ve_g]] \big)^2$ 
is the subspace of degree $2$ elements in  
$$
\mm \, \bs^q\, H^q(M, \bbC)[[\ve, \ve_1, \dots, \ve_g]]\,. 
$$
 
Since the $L_{\infty}$ quasi-isomorphism \eqref{exp-Pi} is strictly invertible, it  
induces a bijection of sets 
\begin{equation}
\label{exp-Pi-on-MC}
\exp(\Pi)_*  \,:\,  \MC(\ti{\cL}_{\Om}, -d, 0) \, \stackrel{\cong}{\longrightarrow} \,  \MC(\ti{\cL}_{\Om}, -d,  [~,~]_{\om}). 
\end{equation}

Moreover the second part of Corollary \ref{cor:exp-Pi} implies that, if $\mu_1, \mu_2 \in  \MC(\ti{\cL}_{\Om}, -d, 0)$
are connected by a $1$-cell in $\mMC_{\bul} (\cL_{\Om}, -d, 0)$ then $\exp(\Pi)_*(\mu_1)$ and 
$\exp(\Pi)_*(\mu_2)$ are connected by a $1$-cell in $\mMC(\cL_{\Om}, -d,  [~,~]_{\om})$. 
 
In other words, \eqref{exp-Pi-on-MC} gives us a well defined map 
\begin{equation}
\label{Te-Pi}
\Te_{\Pi} : \pi_0 \big( \ti{\cG} (\cL_{\Om}, -d, 0) \big) \to \TL_{\Om}\,.
\end{equation}

Let us prove that 
\begin{claim}
\label{cl:Te-Pi}
The map $\Te_{\Pi}$ is a bijection of sets. 
\end{claim}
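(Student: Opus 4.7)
The plan is to construct an explicit two-sided inverse of $\Te_\Pi$ using the strict invertibility of $\exp(\Pi)$ asserted in Corollary \ref{cor:exp-Pi}. Since $\exp(\Pi)$ is a strict automorphism of the cofree coalgebra $\und{S}(\bsi \cL_\Om)$ intertwining $Q_{-d}$ with $Q_{-d} + Q_{[~,~]_{\om}}$, its inverse $\exp(-\Pi)$ is a strictly invertible $L_\infty$ quasi-isomorphism in the opposite direction,
$$
\exp(-\Pi) : (\cL_\Om, -d, [~,~]_{\om}) \leadsto (\cL_\Om, -d, 0),
$$
and similarly with $L$ in place of $\cL_\Om$.

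First I would verify that $\Pi$ preserves the sub-coalgebra $\und{S}(\bsi \ti{\cL}_\Om)$, so that both $\exp(\Pi)$ and $\exp(-\Pi)$ restrict to strictly invertible $L_\infty$ automorphisms between the two dg Lie structures on $\ti{\cL}_\Om$. Tracking the powers of $\ve$ in \eqref{Pi}, if $\eta_1$ lies in the component $(1/\ve^{q_1-1}) \mm\, \Om^{q_1}(M)[[\ve, \ve_1, \dots, \ve_g]]$ and $\eta_2$ in $(1/\ve^{q_2-1}) \mm\, \Om^{q_2}(M)[[\ve, \ve_1, \dots, \ve_g]]$, then $p \circ \Pi(\bs^{-2}\eta_1 \,\bs^{-2}\eta_2)$ lands in $\bsis$ of $(1/\ve^{(q_1+q_2-2)-1})\, \mm^2\, \Om^{q_1+q_2-2}(M)[[\ve, \ve_1, \dots, \ve_g]] \subset \bsi \ti{\cL}_\Om$; the extra factor of $\ve$ built into \eqref{Pi} is exactly what balances the books. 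The same calculation, together with the inclusion \eqref{Pi-is-good} of Remark \ref{rem:Pi}, shows that $\Pi$, and hence $\exp(\pm\Pi)$, preserves the enlarged coalgebra $\und{S}(\bsi L)$.

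Consequently, $\exp(\Pi)_*$ and $\exp(-\Pi)_*$ are mutually inverse bijections both of the MC sets $\MC(\ti{\cL}_\Om, -d, 0) \leftrightarrow \MC(\ti{\cL}_\Om, -d, [~,~]_{\om})$ (this is already \eqref{exp-Pi-on-MC}) and, by applying the same argument to the enlarged algebra $L$, of the sets of $1$-cells connecting such MC elements in $\mMC_1(\cL_\Om, -d, 0)$ and $\mMC_1(\cL_\Om, -d, [~,~]_{\om})$ respectively. In particular, two MC elements of $\ti{\cL}_\Om$ lie in the same orbit of the gauge action \eqref{action} for one of the two brackets if and only if their $\exp(\Pi)_*$-images do for the other. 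Hence $\exp(-\Pi)_*$ descends to a well-defined map $\TL_\Om \to \pi_0 \big( \ti{\cG}(\cL_\Om, -d, 0) \big)$ that is a two-sided inverse of $\Te_\Pi$.

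The main point to be careful about is that morphisms in $\ti{\cG}$ are parametrized by gauge elements in the \emph{larger} algebra $\cL_\Om$ rather than in $\ti{\cL}_\Om$, which is why the simplicial enhancement must be carried out on the asymmetric dg Lie algebra $L$ of Remark \ref{rem:Pi}. Once the inclusion \eqref{Pi-is-good} secures that $\exp(\pm\Pi)$ respect this asymmetry, no further obstruction arises, and the two-sided inverse above completes the proof of the claim.
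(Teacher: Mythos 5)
Your argument is correct and follows essentially the same route as the paper: the paper likewise deduces surjectivity from the bijectivity of $\exp(\Pi)_*$ on $\MC(\ti{\cL}_{\Om})$ and injectivity by applying $\exp(-\Pi)_*$ to a connecting $1$-cell via the second part of Corollary \ref{cor:exp-Pi}, which is exactly your two-sided inverse packaged as two separate implications. Your extra bookkeeping of the powers of $\ve$ (showing $\Pi$ preserves $\und{S}(\bsi\ti{\cL}_{\Om})$ and respects the asymmetric algebra $L$ of Remark \ref{rem:Pi}) is a correct elaboration of points the paper delegates to Remark \ref{rem:Pi} and Corollary \ref{cor:exp-Pi}.
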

\begin{proof}
The surjectivity of $\Te_{\Pi}$ follows from the fact that the map
\eqref{exp-Pi-on-MC} is a bijection. 

To prove the injectivity, we consider two MC elements $\mu_1, \mu_2 \in \MC(\ti{\cL}_{\Om}, -d, 0)$
and assume that the MC elements  $\exp(\Pi)_*(\mu_1)$ and $\exp(\Pi)_*(\mu_2)$ are connected by 
a $1$-cell in  $\mMC(\cL_{\Om}, -d,  [~,~]_{\om})$. Due to the second part of Corollary \ref{cor:exp-Pi}, 
the MC elements 
$$
\exp(-\Pi)_* \circ \exp(\Pi)_*(\mu_1) = \mu_1
\qquad \textrm{and} \qquad 
\exp(-\Pi)_* \circ \exp(\Pi)_*(\mu_2) = \mu_2
$$
are connected by a $1$-cell in $\mMC(\cL_{\Om}, -d, 0)$. 

Thus  $\Te_{\Pi}$ is indeed injective. 
\end{proof}

By putting all the things together, we can now complete the proof of Theorem \ref{thm:main}.

Indeed, due to Proposition \ref{prop:MC-in-quest}, we have a bijection 
$$
\TL \cong \TL^{\tw}.
$$
The map \eqref{TL-PV-wt-TL} induced by the $L_{\infty}$ quasi-isomorphism \eqref{cU-al-needed}
gives us a bijection
$$
\TL_{\sPV} \cong  \TL^{\tw}. 
$$

The isomorphism $\sJ_{\om}$ from \eqref{ticL-PV-ticL-Om} gives us a bijection 
$$
\TL_{\sPV} \cong  \TL_{\Om}
$$
and, finally, the map $\exp(\Pi)_*$ induces a bijection 
$$
\pi_0 \big( \ti{\cG} (\cL_{\Om}, -d, 0) \big)  \cong  \TL_{\Om}.
$$

Since 
$$
\pi_0 \big( \ti{\cG} (\cL_{\Om}, -d, 0) \big) ~\cong~
\bigoplus_{q \geq 0} \frac{1}{\ve^{q-1}} \, \big( \mm \, \bs^q\, H^q(M, \bbC)[[\ve, \ve_1, \dots, \ve_g]] \big)^2\,,
$$
the proof of Theorem \ref{thm:main} complete. \qed

\section{Final remarks}
\label{sec:conj}

In this concluding section, we would like to pose two open questions. 

It is known \cite{stable}, \cite[4.6.3]{K} and \cite{K-conj} that there are infinitely many homotopy 
classes of formality quasi-isomorphisms from the dg Lie algebra of polyvector fields to the dg Lie 
algebra of polydifferential operators. So it would be interesting to determine whether 
the bijection from Theorem \ref{thm:main} depends on the homotopy type of the 
formality quasi-isomorphism for polydifferential operators. 

We believe that 
\begin{conj}
\label{conj:Fedosov}
There is a construction of a bijection between $\TL$ and the set of formal series in \eqref{veveve-H} 
which bypasses the use of Kontsevich's formality theorem. 
This construction comes from an appropriate generalization of the zig-zag of quasi-isomorphisms of 
dg Lie algebras from paper \cite{Hoch-dR}. 
\end{conj}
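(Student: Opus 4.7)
The plan is to replicate the architecture of the proof of Theorem \ref{thm:main} given in Section \ref{sec:proof}, but with the Kontsevich formality morphism \eqref{cU-fixed} replaced by a Fedosov-type construction combined with an appropriate adaptation of the zig-zag of dg Lie algebra quasi-isomorphisms from \cite{Hoch-dR}. Recall that in the symplectic setting, the paper \cite{Hoch-dR} constructs a zig-zag of quasi-isomorphisms of dg Lie algebras connecting the Hochschild cochain complex of the Fedosov-deformed algebra $(\cO(M)[[\h]], *_\al)$ with (an appropriate shift of) the de Rham complex $\Om^{\bul}(M)[[\h]]$, and the construction is purely Fedosov-theoretic: it uses the Weyl-algebra bundle, a Fedosov flat connection, and the fiberwise HKR-type map from fiberwise Hochschild cochains to fiberwise exterior forms, without any reference to Kontsevich's graphs. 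The goal is to upgrade this zig-zag from the $1$-parameter base $\bbC[[\h]]$ to the graded base $\bbk = \bbC[[\ve,\ve_1,\dots,\ve_g]]$.

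First, I would construct $\mu_\al \in \MC(\cL_\sPD)$ of Section \ref{sec:PV-PD} directly via Fedosov's construction applied to $(M,\om)$, using $\ve$ as the Fedosov formal parameter. This produces the star product $*_\al$ and the twisted Hochschild differential $\del^\Hoch_*$ without appealing to Kontsevich's $\cU$. The bijection of Proposition \ref{prop:MC-in-quest} (the shift by $\mu_\al$) is entirely elementary and needs no change, so the remaining task is to replace the $L_\infty$-morphism \eqref{cU-al-needed} by a zig-zag of strict dg Lie morphisms and then feed the result into the proof of Proposition \ref{prop:to-PV} via the Goldman-Millson theorem.

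Next, I would extend the Fedosov-type zig-zag of \cite{Hoch-dR} by $\bbk$-linearity. Concretely, one considers the Weyl algebra bundle on $M$ with coefficients in $\bbk$, the corresponding Fedosov flat connection (which exists because $\om$ is $\bbk$-linearly a symplectic form and the obstruction classes live in the same de Rham complex), and the sub-bundle of fiberwise polydifferential operators. The zig-zag then takes the shape
\[
\bigl(\ti{\cL}_\sPD, \del^\Hoch_*, [,]_G\bigr) \;\xleftarrow{\ \sim\ }\; \bigl(\text{Fedosov resolution, fiber $\sPD$}\bigr) \;\xrightarrow{\ \sim\ }\; \bigl(\ti{\cL}_\Om, -d, [,]_\om\bigr),
\]
where each arrow is a \emph{strict} morphism of dg Lie algebras compatible with the descending $\mm$-adic filtrations, and the induced map on associated graded pieces reduces to the HKR quasi-isomorphism tensored with the identity on $\mm^k/\mm^{k+1}$. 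By the Lemma on filtered complexes used in the proof of Claim \ref{cl:GM-OK}, each arrow is a filtered quasi-isomorphism, so Goldman-Millson \cite{GM} gives bijections on $\pi_0$ of the corresponding DGH groupoids. Composing with Claim \ref{cl:TL-Om} and Claim \ref{cl:Te-Pi} then yields the desired bijection $\TL \cong \bigoplus_q \ve^{1-q}\bigl(\mm\,\bs^q H^q(M,\bbC)[[\ve,\ve_1,\dots,\ve_g]]\bigr)^2$ without ever invoking Theorem \ref{thm:K}.

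The main obstacle, and the reason this is only a conjecture, is the construction of the middle term and the two quasi-isomorphisms of the zig-zag in a way that genuinely extends \cite{Hoch-dR} to the graded base and respects both filtrations \eqref{tilde-sPD-filtr} and \eqref{filtr-ti-cL-Om}. The target side is delicate: the Lie bracket $[,]_\om$ on $\ti{\cL}_\Om$ involves negative powers of $\ve$ (compensated by the exterior-form degree shift, cf.\ \eqref{brack-on-forms}), so one must show that the fiberwise map from polydifferential operators to exterior forms intertwines the Gerstenhaber bracket twisted by the Fedosov star product with $[,]_\om$ after the change of variables $\sJ_\om$, and that the result is well-defined on $\ve\mm^{k-1}$-layers rather than only on $\mm^k$-layers. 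The graded parameters $\ve_1,\dots,\ve_g$ appear only as passive scalars and should cause no additional difficulty, so the whole problem reduces to a careful re-examination of \cite{Hoch-dR} to check that its zig-zag descends to $\ti{\cL}_\sPD \rightsquigarrow \ti{\cL}_\Om$ (i.e.\ survives the restriction to the ideal generated by $\ve$), which I expect to follow from the fact that the Fedosov construction produces $\mu_\al$ in $\ve\,\sPD^{\bul}(M)[[\ve]]$ itself.
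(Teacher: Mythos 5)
The statement you are addressing is Conjecture \ref{conj:Fedosov}, which the paper explicitly leaves open --- there is no proof of it in the text, only the surrounding discussion in Section \ref{sec:conj}. Your proposal is a sensible and well-informed \emph{strategy}, and it matches what the authors evidently have in mind (replace the Kontsevich $L_\infty$-morphism \eqref{cU-al-needed} by a Fedosov-theoretic zig-zag of strict dg Lie quasi-isomorphisms generalizing \cite{Hoch-dR}, keep Proposition \ref{prop:MC-in-quest}, Claim \ref{cl:TL-Om} and Claim \ref{cl:Te-Pi} unchanged, and run the Goldman--Millson argument through the filtered quasi-isomorphisms). But it is not a proof: the entire mathematical content of the conjecture is the construction of the middle term of the zig-zag and the verification that its two legs are filtered quasi-isomorphisms of dg Lie algebras over the graded base, and you explicitly defer exactly that step (``the main obstacle, and the reason this is only a conjecture, is the construction of the middle term and the two quasi-isomorphisms''). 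What remains is therefore a restatement of the conjecture with a more detailed road map, not a resolution of it.

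Two concrete points you would have to settle before this becomes a proof. First, the zig-zag of \cite{Hoch-dR} identifies the Hochschild complex of the Fedosov-quantized algebra with the de Rham complex as cochain complexes (and up to homotopy as dg Lie algebras), but your argument needs the target bracket to be \emph{precisely} $[~,~]_{\om}$ of \eqref{brack-on-forms}, including the negative powers of $\ve$ absorbed by the shift $\sJ_{\om}$, and needs each leg to respect the finer filtration \eqref{tilde-sPD-filtr}--\eqref{filtr-ti-cL-Om} by $\ve\,\mm^{k-1}$ rather than $\mm^k$; neither is automatic from \cite{Hoch-dR} and both require genuinely new computations. Second, the injectivity half of Proposition \ref{prop:to-PV} is obtained by applying \cite[Theorem 1.1]{GM} to the morphism \eqref{cU-al} on the \emph{ambient} dg Lie algebras $\cL_{\sPV} \to \cL^{\mu_{\al}}_{\sPD}$, because $\ti{\cG}$ is a full subgroupoid of the bigger groupoid and the gauge transformations identifying two objects of $\ti{\cG}$ live in the ambient algebra. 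Your zig-zag as written connects only $\ti{\cL}_{\sPD}$ with $\ti{\cL}_{\Om}$; to reproduce the injectivity argument you must construct it compatibly on the pair $(\cL^{\mu_{\al}}_{\sPD}, \ti{\cL}_{\sPD})$ and $(\cL_{\Om}, \ti{\cL}_{\Om})$ simultaneously, and Remark \ref{rem:Pi} shows this is delicate on the de Rham side because $\Pi$ does not preserve $\bsi\cL_{\Om} \otimes \bsi\cL_{\Om}$.
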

\begin{conj}
\label{conj:F-to-K}
The bijection between $\TL$ and the set of formal series in \eqref{veveve-H} coming from the above conjectural construction 
coincides with the bijection produced in this paper for any choice of a formality 
quasi-isomorphism \eqref{cU-fixed}.
\end{conj}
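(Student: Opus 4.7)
The plan is conditional on Conjecture \ref{conj:Fedosov}, so assume we have an explicit zig-zag $\mathcal{Z}^{\text{Fed}}$ of dg Lie algebra quasi-isomorphisms (of Fedosov flavor, in the spirit of \cite{Hoch-dR}) producing a bijection $\Phi_{\text{Fed}} : \TL \to \bigoplus_{q \ge 0} \ve^{-(q-1)} \big( \mm\, \bs^q H^q(M,\bbC)[[\ve,\ve_1,\dots,\ve_g]] \big)^2$, and let $\Phi_{\cU}$ denote the bijection produced in Sections \ref{sec:PV-PD}--\ref{sec:proof} from a chosen Kontsevich-type formality $\cU$. The strategy is to factor both $\Phi_{\text{Fed}}$ and $\Phi_{\cU}$ through the same ``skeleton'' of intermediate identifications --- namely $\TL \cong \TL^{\tw}$ from Proposition \ref{prop:MC-in-quest}, the symplectic isomorphism $\sJ_\om$ from \eqref{ticL-PV-ticL-Om}, and the $\exp(\Pi)$-twist from Corollary \ref{cor:exp-Pi} --- and then compare only the single step that genuinely depends on the choice of formality: the bijection $\TL^{\tw} \cong \TL_{\sPV}$ from Proposition \ref{prop:to-PV}.

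First I would verify that the constructions of Section \ref{sec:PV} (the maps $\sJ_\om$ and $\exp(\Pi)$) are intrinsic to the symplectic geometry of $(M,\om)$ and the base ring $\bbC[[\ve, \ve_1, \dots, \ve_g]]$; they involve no choice of a formality morphism. Hence any Fedosov-type zig-zag $\mathcal{Z}^{\text{Fed}}$, once twisted by $\mu_{\al}$ and restricted to the topological locus, must be post-composed with the same $(\sJ_\om)_*$ and $\exp(\Pi)_*^{-1}$ as in Section \ref{sec:proof}. The problem therefore reduces to showing that the bijection $\TL^{\tw} \cong \TL_{\sPV}$ induced by the twisted Kontsevich morphism $\cU^{\al}$ coincides with the analogous bijection induced by $\mathcal{Z}^{\text{Fed}}$ twisted by $\ve\al$.

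Next I would invoke the homotopy invariance of the DGH $\infty$-groupoid: any two $L_\infty$-quasi-isomorphisms of filtered dg Lie algebras that are connected by an $L_\infty$-homotopy induce the same map on $\pi_0 \mMC_{\bul}$. Both $\cU$ and $\mathcal{Z}^{\text{Fed}}$ are $L_\infty$-quasi-isomorphisms from $\sPV^{\bul}(M)$ to $\sPD^{\bul}(M)$ whose linear terms coincide with the HKR embedding \eqref{HKR}. I would use this --- together with the existence of interpolating $L_\infty$-morphisms in $\mMC_{\bul}(\Der \sPD^{\bul}(M))$ arising from the DGH groupoid of the deformation complex of $\sPD^{\bul}(M)$ --- to build an $L_\infty$-homotopy between $\cU^{\al}$ and $\mathcal{Z}^{\text{Fed},\al}$ that stabilizes the subgroupoid $\ti{\cG}(\cL^{\mu_\al}_{\sPD})$ and thereby acts as the identity on $\TL^{\tw}$.

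The main obstacle is precisely the last step: it is not true in general that two formality $L_\infty$-quasi-isomorphisms with the same HKR linear term induce equal maps on $\pi_0 \mMC_{\bul}$ --- the results of \cite{stable} and \cite{K-conj} exhibit a pro-unipotent Grothendieck-Teichm\"uller torsor of inequivalent ones. The key point to extract is that this torsor acts \emph{trivially} on the subset corresponding to the topological locus, i.e., on MC classes whose Kodaira-Spencer class is $\ve\al$ for a genuine non-degenerate Poisson $\al$. In the ungraded case $g = 0$, this is the classical independence of the Bertelson-Cahen-Gutt-Deligne parametrization from the choice of formality; in the $\bbZ$-graded case the analogous statement should follow from a spectral-sequence argument with respect to the $\mm$-adic filtration \eqref{filtr-sPD-twisted} on $\cL^{\mu_\al}_{\sPD}$, whose $E_1$-page by Proposition \ref{prop:HKR} is controlled purely by polyvector fields and whose higher pages feed only into the de Rham cohomology already captured by the codomain of $\Phi_{\cU}$ and $\Phi_{\text{Fed}}$.
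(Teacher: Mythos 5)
The statement you are trying to prove is Conjecture \ref{conj:F-to-K}; the paper offers no proof of it (nor of Conjecture \ref{conj:Fedosov}, on which it depends), so there is no argument in the text to compare yours against. The authors explicitly leave open even the weaker question of whether the bijection of Theorem \ref{thm:main} depends on the homotopy class of the formality quasi-isomorphism \eqref{cU-fixed}, and they only remark that Conjecture \ref{conj:F-to-K} ``can be tackled using the ideas from \cite{BDW}.'' Your proposal should therefore be judged as a research plan, and as such it contains a genuine gap at exactly the point you yourself flag as ``the main obstacle.''

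Concretely: your reduction to comparing the two induced bijections $\TL^{\tw} \cong \TL_{\sPV}$ is sensible, and you are right that $\sJ_{\om}$ and $\exp(\Pi)$ involve no choice of formality morphism. But the assertion that the Grothendieck--Teichm\"uller torsor of homotopy classes of formality morphisms ``acts trivially'' on the topological locus is precisely the mathematical content of the conjecture, and neither of the two arguments you sketch establishes it. The homotopy-invariance argument via $\pi_0\mMC_{\bul}$ only applies to $L_{\infty}$-morphisms that are actually homotopic, which --- as you note --- $\cU$ and a Fedosov-type zig-zag need not be; asserting the existence of an interpolating homotopy ``that stabilizes $\ti{\cG}(\cL^{\mu_{\al}}_{\sPD})$'' begs the question. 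The spectral-sequence sketch also does not close the gap: the $E_1$-page of the $\mm$-adic filtration sees only the linear term (HKR), which is common to both morphisms, but the map induced on $\pi_0$ of the Deligne groupoid is \emph{not} determined by the $E_1$-page --- the higher Taylor coefficients $\cU_n$, $n \ge 2$, enter through the twisted morphism $\cU^{\al}$ and through the nonlinear terms of $\cU^{\al}_*$ on MC elements, and these are exactly where inequivalent formality morphisms differ. A further unjustified step is the claim that any Fedosov-type construction in the sense of Conjecture \ref{conj:Fedosov} must factor through the same chain $\TL \cong \TL^{\tw} \cong \TL_{\sPV} \cong \TL_{\Om}$: the zig-zag of \cite{Hoch-dR} relates Hochschild cochains to de Rham-type complexes without passing through polyvector fields in the manner of Section \ref{sec:PV-PD}, so the two parametrizations need not share the skeleton you propose, and the comparison would have to be made at the level of the final answer \eqref{veveve-H} rather than step by step. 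Finally, the appeal to ``the classical independence of the Bertelson--Cahen--Gutt--Deligne parametrization from the choice of formality'' in the case $g=0$ is itself a nontrivial comparison-of-characteristic-classes statement of the kind studied in \cite{BDW}, not a standard fact one can cite in passing.
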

If true, the statement of Conjecture \ref{conj:F-to-K} would imply that the constructed bijection 
between $\TL$ and  the set of formal series in \eqref{veveve-H} does not depend on the 
choice of a formality quasi-isomorphism \eqref{cU-fixed}.  

We believe that a solution of Conjecture \ref{conj:Fedosov} will allow us to produce 
explicit examples of $A_{\infty}$-structures on $\cO(M)[[\ve, \ve_1, \dots, \ve_g]]$ 
corresponding to formal series in  \eqref{veveve-H} for a large class of symplectic manifolds. 
To our genuine surprise, Kontsevich's 
quasi-isomorphism \cite{K} is not very helpful for computing these $A_{\infty}$-structures 
even in the case when $M$ is an even dimensional torus with the standard symplectic structure!

We also believe that Conjecture \ref{conj:F-to-K} can be tackled using the ideas from \cite{BDW}.

\appendix

\section{Properties of the coderivation $\Pi$}
\label{app:Pi}
In this appendix, we prove the properties of the coderivation $\Pi$ (see \eqref{Pi}) of 
the coalgebra \eqref{S-ti-cL} listed in Proposition \ref{prop:Pi}. 

It is straightforward to see that $\Pi$ has degree zero. 

Since
$$
p \circ [\Pi , Q_{-d}] \,( \bs^{-2} \, \eta_1 \, \dots \, \bs^{-2} \, \eta_n ) = 
p \circ Q_{[~,~]_{\om}} \, ( \bs^{-2} \, \eta_1 \, \dots \, \bs^{-2} \, \eta_n ) = 0
$$
if $n \neq 2$, to prove \eqref{Pi-nice}, it suffices to show that 
\begin{equation}
\label{Pi-goal}
p \circ (\Pi \circ Q_{-d} - Q_{-d} \circ \Pi)\, (\bs^{-2} \, \la\, \bs^{-2} \, \eta) = 
p \circ Q_{[~,~]_{\om}} \,  (\bs^{-2} \, \la\, \bs^{-2} \, \eta),
\end{equation}
where $\la$ and $\eta$ are homogeneous vectors in
$$
\frac{1}{\ve^{k-1}} \,\mm \, \Om^k(M) [[\ve, \ve_1, \dots, \ve_g]]
\qquad
\textrm{and}
\qquad
 \frac{1}{\ve^{r-1}} \,\mm \, \Om^r(M) [[\ve, \ve_1, \dots, \ve_g]]
$$
respectively. 

Let $U \subset M$ be an open coordinate subset with coordinates $x^1, x^2, \dots, x^m$ and 
$$
\la \big|_{U}  =  dx^{i_1} dx^{i_2} \dots dx^{i_k} \, \la_{i_1 \dots i_k}\,, \qquad
\eta  \big|_{U} = dx^{j_1} dx^{j_2} \dots dx^{j_r} \, \eta_{j_1 \dots j_r}\,,
$$
where $\la_{i_1 \dots i_k} \in C^{\infty}(U)[[\ve, \ve_1, \dots, \ve_g]][\ve^{-1}]$, 
$\eta_{j_1 \dots j_r} \in C^{\infty}(U)[[\ve, \ve_1, \dots, \ve_g]][\ve^{-1}]$ and summation 
over repeated indices is assumed. 

The direct computation shows that 
$$
p \circ \Pi \circ Q_{-d} \, (\bs^{-2} \, \la\, \bs^{-2} \, \eta) \big|_{U} = 
$$
\begin{equation}
\label{needed}
(-1)^{|\la|} \bs^{-2}\, \ve \al^{ij}(x)  
dx^{i_1} dx^{i_2} \dots dx^{i_k} \, \del_{x^i} \la_{i_1 \dots i_k} \,
\frac{\del \eta}{\del \, d x^j}  
\end{equation}
\begin{equation}
\label{needed1}
- \bs^{-2}\, \ve \al^{ij}(x) \frac{\del \la}{\del d x^i} \,
 dx^{j_1} dx^{j_2} \dots dx^{j_r} \, \del_{x^j} \eta_{j_1 \dots j_r}
\end{equation}
\begin{equation}
\label{unwanted}
- (-1)^{|\la|} \bs^{-2}\, \ve \, k\,  \al^{i_1 j}(x)
dx^{i} dx^{i_2} \dots dx^{i_k} \, \del_{x^i} \la_{i_1 \dots i_k}
\frac{\del \eta}{\del \, d x^j}  
\end{equation}
\begin{equation}
\label{unwanted1}
+ \bs^{-2}\, \ve \, r\,  \al^{i j_1}(x) \frac{\del \la}{\del d x^i} \,
 dx^{j} dx^{j_2} \dots dx^{j_r} \, \del_{x^j} \eta_{j_1 \dots j_r}\,.
\end{equation}

Furthermore, 
$$
- p \circ Q_{-d} \circ \Pi \, (\bs^{-2} \, \la\, \bs^{-2} \, \eta) \big|_{U} = 
$$
\begin{equation}
\label{needed11}
(-1)^{\la} \,\bs^{-2}\, \ve \, d x^t \del_{x^t} \al^{ij}(x) \, \frac{\del \la}{\del d x^i} \, \frac{\del \eta}{\del d x^j}
\end{equation}
\begin{equation}
\label{unwanted11}
(-1)^{|\la|} \bs^{-2}\, \ve \, k\,  \al^{i_1 j}(x)
dx^{i} dx^{i_2} \dots dx^{i_k} \, \del_{x^i} \la_{i_1 \dots i_k}
\frac{\del \eta}{\del \, d x^j}  
\end{equation}
\begin{equation}
\label{unwanted111}
- \bs^{-2}\, \ve \, r\,  \al^{i j_1}(x) \frac{\del \la}{\del d x^i} \,
 dx^{j} dx^{j_2} \dots dx^{j_r} \, \del_{x^j} \eta_{j_1 \dots j_r}\,.
\end{equation}

Term \eqref{unwanted11} (resp. term \eqref{unwanted111}) cancels term
\eqref{unwanted} (resp. term \eqref{unwanted1}) in the left hand side of \eqref{Pi-goal}. 
Moreover the sum of terms \eqref{needed}, \eqref{needed1} and \eqref{needed11} coincides with 
$$
p \circ Q_{[~,~]_{\om}} \,  (\bs^{-2} \, \la\, \bs^{-2} \, \eta) \big|_{U}\,.
$$

Thus identity \eqref{Pi-goal} (and hence \eqref{Pi-nice}) is proved. 

To prove \eqref{Pi-with-Q-bracket}, we observe that 
$$
p \circ \Pi \circ  Q_{[~,~]_{\om}} \,(  \bs^{-2} \, \eta_1 \, \dots \, \bs^{-2} \, \eta_n ) =   
p \circ Q_{[~,~]_{\om}} \circ \Pi \,(  \bs^{-2} \, \eta_1 \, \dots \, \bs^{-2} \, \eta_n ) = 0
$$
if $n \neq 3$. 

For $n=3$, a direct computation shows that 
$$
p \circ \Pi \circ  Q_{[~,~]_{\om}} \,(  \bs^{-2} \, \eta_1 \,  \bs^{-2} \, \eta_2 \, \bs^{-2} \, \eta_3 ) - 
p \circ Q_{[~,~]_{\om}} \circ \Pi \,(  \bs^{-2} \, \eta_1 \,  \bs^{-2} \, \eta_2 \, \bs^{-2} \, \eta_3 ) = 
$$
$$
-2 \Big(\, (-1)^{|\eta_2|} \, \bs^{-2}\, \ve^2 \,  \al^{i_1 j_1}(x) (\del_{x^{i_1}} \al^{ij}(x))\, 
\frac{\del \eta_1}{\del\, d x^{i}} \frac{\del \eta_2}{\del\, d x^{j}} \frac{\del \eta_3}{\del\, d x^{j_1}} 
$$
$$
+(-1)^{|\eta_3| + |\eta_1| (|\eta_2| + |\eta_3|)} \,
  \bs^{-2}\, \ve^2 \,  \al^{i_1 j_1}(x) (\del_{x^{i_1}} \al^{ij}(x))\, 
\frac{\del \eta_2}{\del\, d x^{i}} \frac{\del \eta_3}{\del\, d x^{j}} \frac{\del \eta_1}{\del\, d x^{j_1}} 
$$
$$
+(-1)^{|\eta_1| + |\eta_3| (|\eta_1| + |\eta_2|)} \,
  \bs^{-2}\, \ve^2 \,  \al^{i_1 j_1}(x) (\del_{x^{i_1}} \al^{ij}(x))\, 
\frac{\del \eta_3}{\del\, d x^{i}} \frac{\del \eta_1}{\del\, d x^{j}} \frac{\del \eta_2}{\del\, d x^{j_1}} \, \Big)  =
$$
$$
2 (-1)^{|\eta_2|+1} \, \bs^{-2}\, \ve^2 \,  
\big(
\al^{i_1 j_1}(x) \del_{x^{i_1}} \al^{ij}(x) +
\al^{i_1 i}(x) \del_{x^{i_1}} \al^{j j_1}(x) +
\al^{i_1 j}(x) \del_{x^{i_1}} \al^{j_1 i}(x) 
\big)\,
\frac{\del \eta_1}{\del\, d x^{i}} \frac{\del \eta_2}{\del\, d x^{j}} \frac{\del \eta_3}{\del\, d x^{j_1}} \,.
$$
Thus \eqref{Pi-with-Q-bracket} is a consequence of the Jacoby identity for the Poisson structure $\al$.
Proposition \ref{prop:Pi} is proved.  \qed

~\\

\noindent\textsc{Department of Mathematics,\\
Temple University, \\
Wachman Hall Rm. 638\\
1805 N. Broad St.,\\
Philadelphia PA, 19122 USA \\
\emph{E-mail addresses:} {\bf elif238@gmail.com}, {\bf vald@temple.edu}}

\end{document}